\theoremstyle{plain}
\newtheorem{theorem}{Theorem}[section]
\newtheorem{proposition}[theorem]{Proposition} 
\newtheorem{corollary}{Corollary}
\newtheorem*{corollary*}{Corollary}
\theoremstyle{definition}
\newtheorem*{remark*}{Remark}
\numberwithin{equation}{section}
\newcommand{\RR}{\mathbb{R}}
\newcommand{\ZZ}{\mathbb{Z}}
\newcommand{\eps}{\varepsilon}
\newcommand{\sign}{\mathrm{sign}\,}
\newcommand{\arccot}{\mathop\mathrm{arccot}}
\newcommand{\const}{\mathrm{const}}
\newcommand{\si}{\mathrm{si}}
\newcommand{\Si}{\mathrm{Si}}
\newcommand{\Ei}{\mathrm{E}}
\newcommand{\Ci}{\mathrm{Ci}}
\newcommand{\Cin}{\mathrm{Cin}}
\renewcommand{\Re}{\mathrm{Re}}
\renewcommand{\Im}{\mathrm{Im}}
\newcommand{\arcsh}{\mathop\mathrm{arcsinh}}
\newcommand{\Ll}{\mathrm{L}}
\newcommand{\Sl}{\mathrm{S}}
\newcommand{\Tl}{\mathrm{T}}
\newcommand{\Spi}{\mathrm{\phi}} 
\newcommand{\reE}{\mathrm{g}} 
\newcommand{\imE}{\mathrm{f}}
\newcommand{\Ssi}{\mathrm{Ssi}}
\newcommand{\Sci}{\mathrm{Sci}}
\newcommand{\Cci}{\mathrm{Cci}}
\newcommand{\Eci}{\mathrm{Eci}}
\newcommand{\ba}[1]{\begin{array}{#1}}
\newcommand{\ea}{\end{array}}
\newcommand{\dst}{\displaystyle}
\begin{document}


\title%
{Around the Fej\'{e}r-Jackson inequality: 
Tight bounds 
for certain oscillatory functions
via Laplace transform representations}

\author{Sergey Sadov}
\date{}

\maketitle

\begin{abstract}
The error of approximation of the $2\pi$-periodic sawtooth function $(\pi-x)/2$, $0\leq x<2\pi$, 
by its $n$-th Fourier polynomial is shown to be bounded by $\arccot((2n+1)\sin(x/2))$.
Related, asymptotically tight inequalities with explicit constants 
are given for the integral of the Dirichlet kernel interpolated to
non-integer values of frequency parameter
and for the Taylor series remainder of the logarithmic function $\log(1-z)$ in the unit circle.
The proofs are based on the Laplace transform representation of the Lerch Zeta function with
$s=1$.
 
\medskip\noindent 
{\em Keywords:}\
Fej\'{e}r-Jackson inequality, trigonometric series, Laplace transform, Gibbs phenomenon, analytic inequalities, majorization
of oscillatory functions.
\end{abstract}



\section{Introduction}

In this paper we give a unified treatment, based on the Laplace transform representations, of
some inequalities for specific sums and integrals involving trigonometric functions.

Our guiding principle is to exhibit non-oscillating envelopes of graphs of oscillatory functions
that are asymptotically tight as frequency parameter becomes large, and do not contain hidden $O$-constants.

This is in a guise the same theme as an investigation of the Gibbs phenomenon in Fourier theory.


Here are two characteristic results.

\begin{theorem}
\label{thm:myFJT}
For any integer $n\geq 1$ and $0< x<\pi$,
\begin{equation}
\label{ineq:myFJT}
\left|\sum_{k=1}^n\frac{\sin kx}{k}-\frac{\pi-x}{2}\right|<\arccot\left((2n+1)\sin\frac{x}{2}\right).
\end{equation}
\end{theorem}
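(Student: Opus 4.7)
The plan is to represent the Fourier tail via the Laplace transform representation of the Lerch zeta function at $s=1$, and then bound the resulting contour integral. Starting from $\Phi(z,1,a)=\int_0^\infty e^{-at}/(1-ze^{-t})\,dt$ with $z=e^{ix}$ and $a=n+1$, taking imaginary parts gives
\[
T_n(x):=\sum_{k=n+1}^\infty\frac{\sin kx}{k}=\Im\int_0^\infty\frac{e^{(n+1)(ix-t)}}{1-e^{ix-t}}\,dt,
\]
and $|T_n(x)|=|S_n(x)-(\pi-x)/2|$ is the quantity to bound. Using the identity $|1-e^{ix-t}|^2=4e^{-t}(\sinh^2(t/2)+\sin^2(x/2))$ and the substitution $w=(t-ix)/2$, this becomes the contour integral
\[
T_n(x)=\Im\int_{-ix/2}^{\infty-ix/2}\frac{e^{-(2n+1)w}}{\sinh w}\,dw.
\]

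Next I would deform the horizontal contour at $\Im w=-x/2$ down to the real axis, routing around the simple pole $w=0$ (residue $1$) by a quarter-circle arc that contributes $i\pi/2$. Taking imaginary parts of the three pieces yields
\[
T_n(x)=\frac{\pi}{2}-\int_0^{x/2}\frac{\sin((2n+1)y)}{\sin y}\,dy,
\]
so the theorem is equivalent to the pair of strict inequalities
\[
\arctan((2n+1)\sin(x/2))<\int_0^{x/2}\frac{\sin((2n+1)y)}{\sin y}\,dy<\pi-\arctan((2n+1)\sin(x/2))
\]
for $0<x<\pi$, $n\ge 1$.

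For the last step I would combine the Laplace identity $\arctan\alpha=\int_0^\infty e^{-s}\sin(\alpha s)/s\,ds$ with a matching Laplace representation of the Dirichlet-kernel integral obtained from the same Lerch zeta machinery. Aligning the two under a common exponential weight with parameter $\alpha=(2n+1)\sin(x/2)$ and comparing the resulting kernels pointwise should yield the left inequality; the right inequality would follow from the identity $\int_0^{\pi/2}\sin((2n+1)y)/\sin y\,dy=\pi/2$ combined with a parallel argument on $[x/2,\pi/2]$.

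The principal obstacle is this final comparison. The bound is tight as $x\to 0^+$ (both sides approach $\pi/2$), so any coarser estimate fails: bringing the modulus inside the Laplace integral gives $|T_n(x)|\le\int_0^\infty e^{-(2n+1)t}/\sqrt{\sinh^2(t/2)+\sin^2(x/2)}\,dt$, which diverges logarithmically as $x\to 0^+$ while $\arccot((2n+1)\sin(x/2))$ stays bounded by $\pi/2$. The proof must therefore preserve the oscillatory cancellations of $\sin((2n+1)y)/\sin y$ throughout, which is precisely what the contour-deformation approach combined with a same-exponent Laplace comparison is designed to achieve.
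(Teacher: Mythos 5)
Your setup is sound and essentially reproduces the paper's machinery: the Laplace representation of the Lerch zeta function at $s=1$, the contour deformation giving $\sum_{k>n}\sin(kx)/k=\pi/2-\int_0^{x/2}\sin((2n+1)y)/\sin y\,dy$, and the correct observation that pulling the modulus inside the Laplace integral produces a bound that diverges logarithmically as $x\to0^+$ and therefore cannot yield the $\arccot$ envelope near the origin. The gap is that the step carrying all the mathematical content --- the pair of inequalities $\arctan\alpha<\int_0^{x/2}\sin((2n+1)y)/\sin y\,dy<\pi-\arctan\alpha$ with $\alpha=(2n+1)\sin\frac{x}{2}$ --- is left as a plan (``comparing the resulting kernels pointwise should yield the left inequality'') rather than proved. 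As described, that plan is unlikely to close: in the representation $\arccot\alpha=\int_0^\infty e^{-\alpha s}\,s^{-1}\sin s\,ds$ the kernel changes sign, so no pointwise domination is available, and the finite-range Dirichlet integral and the infinite-range Laplace integral do not share a common integration variable in any evident way.

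For comparison, the paper closes this step by splitting at $\alpha\approx1$. For $\alpha\ge1$ it \emph{does} take the modulus inside the Laplace integral (so cancellation need not be preserved throughout, contrary to your diagnosis), obtaining $|e^{inx}L(x,n)|<M(\alpha)$ with $M(t)=\int_0^\infty e^{-tu}(1+u^2)^{-1/2}\,du$, and then invokes the inequality $M(t)<\arccot t$ for $t\ge t_0\approx0.71$ --- proved by an asymptotic expansion for $t\ge4$ together with interval arithmetic on $[1,4]$; the two curves genuinely cross at $t_0$, so some finite verification is unavoidable and no soft comparison will do. For $\alpha<1$ it proves the two sides separately: positivity of $\Im\bigl(e^{inx}L(x,n)\bigr)$ from the sign of the Laplace integrand, and the upper bound $\pi/2-\Si(\alpha)\le\arccot\alpha$ via the comparison $\int_0^{x}\sin(\lambda y)/\sin y\,dy>\Si(\lambda\sin x)$ and $|\si(t)|\le\arccot t$. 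Until you supply an argument of comparable force for the final comparison --- in particular in the regime where $\alpha$ is of order one and the bound is numerically tight --- what you have is a correct reduction, not a proof.
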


Theorem~\ref{thm:myFJT} is an improvement (outside of an $O(1/n)$-neighborhood of $x=\pi$)
of the classical Fej\'{e}r-Jackson-Tur\'{a}n inequality. It has served as the main motivation
for this research. Details and discussion are given in Section~\ref{sec:fjt}.
We suggest the reader to look at the graph, Fig.~\ref{fig:fejer-jackson-all-minorants-n10} in that section.

\begin{theorem}
\label{thm:intD-sumsquares}
For any real $\lambda$ and $0<y<\pi/2$
\begin{equation}
\label{ineq:intD-sumsquares}
\left(\lambda\int_0^y \frac{\cos \lambda t}{\cos t}\,dt\right)^2+
\left(\lambda\int_0^y \frac{\sin \lambda t}{\cos t}\,dt-1\right)^2<\frac{1}{\cos^2 y}.
\end{equation}
\end{theorem}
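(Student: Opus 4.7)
The plan is to define $G(y) := \lambda\int_0^y e^{i\lambda t}/\cos t\,dt$, so that the left-hand side of \eqref{ineq:intD-sumsquares} is exactly $|G(y) - i|^2$, and prove the stronger claim $|G(y) - i| < \sec y$. Since sending $\lambda \mapsto -\lambda$ flips $\Re G$ and fixes $\Im G$, the LHS is even in $\lambda$; the case $\lambda = 0$ gives $1 < \sec^2 y$. I may therefore assume $\lambda > 0$.

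The first step is a Laplace-transform representation obtained by contour deformation. Since the poles of $\sec z$ are all real, $e^{i\lambda z}\sec z$ is holomorphic on the closed upper half-strip $\{0 \le \Re z \le y,\ \Im z \ge 0\}$, and on the horizontal segment $\Im z = R$ is dominated by $e^{-\lambda R}/\sinh R \to 0$. Integrating along the rectangle with vertices $0,\, y,\, y+iR,\, iR$ and letting $R \to \infty$ yields
\[
G(y) \;=\; i\lambda\int_0^\infty \frac{e^{-\lambda s}}{\cosh s}\,ds \;-\; i\lambda e^{i\lambda y}\int_0^\infty e^{-\lambda s}\sec(y+is)\,ds.
\]
Using the identity $i = i\lambda\int_0^\infty e^{-\lambda s}\,ds$, this rearranges to
\[
G(y) - i \;=\; i\lambda\int_0^\infty e^{-\lambda s}\,h(s)\,ds, \qquad h(s) := \frac{1}{\cosh s} - 1 - e^{i\lambda y}\sec(y+is).
\]

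The second step is a pointwise bound $|h(s)| \le \sec y$, strict for $s>0$. Writing $|\sec(y+is)| = 1/\rho(s)$ with $\rho(s) := \sqrt{\cos^2 y + \sinh^2 s}$, the triangle inequality gives $|h(s)| \le g(s)$, where $g(s) := (1 - 1/\cosh s) + 1/\rho(s)$. Then $g(0) = \sec y$, and a direct computation shows $g'(s)$ has the sign of $\rho(s)^3 - \cosh^3 s$; the identity $\cosh^2 s - \rho(s)^2 = 1 - \cos^2 y = \sin^2 y > 0$ forces $\rho < \cosh s$, so $g'(s) < 0$ on $(0, \infty)$. Hence $g$ strictly decreases and $g(s) < \sec y$ for $s > 0$.

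The triangle inequality for integrals then yields
\[
|G(y) - i| \;\le\; \lambda\int_0^\infty e^{-\lambda s}\,|h(s)|\,ds \;<\; \lambda\int_0^\infty e^{-\lambda s}\,\sec y\,ds \;=\; \sec y,
\]
the strict inequality because $|h(s)| < \sec y$ on a set of positive measure. The main obstacle is finding the right representation and the right auxiliary majorant $g$; once the contour has been pushed up and the triangle inequality $|h| \le g$ is in place, the monotonicity of $g$ reduces to the transparent identity $\cosh^2 s - \rho(s)^2 = \sin^2 y$, which ties the estimate precisely to the hypothesis $0 < y < \pi/2$ and also explains the asymptotic tightness as $\lambda \to \infty$ (where equality in the triangle inequality is approached at $s=0$).
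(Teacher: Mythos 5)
Your proof is correct, and its core is the same as the paper's: you arrive at exactly the Laplace transform representation \eqref{lt:Eci} of $\Eci(y,\lambda)$ (note $\sec(y+is)=1/\cosh(s-iy)$), absorb the $-i$ into the integral via $i=i\lambda\int_0^\infty e^{-\lambda s}\,ds$, and apply the triangle inequality to the resulting integrand, just as in the proof of Theorem~\ref{thm:ebycos}. The differences are in execution. First, you obtain \eqref{lt:Eci} directly by pushing the contour of $\int_0^y e^{i\lambda t}\sec t\,dt$ up to $+i\infty$, whereas the paper reaches it through the chain Proposition~\ref{prop:lt-L} $\to$ \eqref{lt:Ll-cosh} $\to$ Proposition~\ref{prop:Eci-L}; your route is more self-contained, the paper's embeds the identity in its general FJ-sum framework. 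Second, and more substantively, you bound the majorant $g(s)=1-1/\cosh s+1/\sqrt{\cos^2 y+\sinh^2 s}$ by its value $g(0)=\sec y$ via a one-line monotonicity check, going straight to the constant bound $1/(\lambda\cos y)$; the paper instead compares the integrand pointwise with $1-1/\sqrt{u^2+1}+1/\sqrt{u^2+\cos^2 y}$, which yields the sharper intermediate estimate $1/\lambda-M(\lambda)+M(\lambda\cos y)$ of Theorem~\ref{thm:ebycos} (markedly better when $\lambda\cos y$ is small), and only then relaxes it to $1/(\lambda\cos y)$ using Proposition~\ref{prop:M-1}. So your argument is shorter and proves exactly the stated theorem, while the paper's detour buys the stronger inequality \eqref{ebycos} expressed through the comparison function $M$ that organizes the rest of the paper.
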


Again, a picture, Fig.~\ref{fig:trig-ineq} in Subsection~\ref{ssec:trig_ineq}, gives a good idea of the quality
of the estimate \eqref{ineq:intD-sumsquares}.

Theorems~\ref{thm:myFJT} and \ref{thm:intD-sumsquares} are closely related, reason being, roughly speaking,
that the Fourier series remainder for the sawtooth  function in the left-hand side of \eqref{ineq:myFJT}
is expressed as an integral of the $n$-th Dirichlet kernel. The left-hand side of \eqref{ineq:intD-sumsquares} 
contains similar integrals with real frequency parameter $\lambda$, which plays the same role as the odd integer 
$2n+1$ in the $n$-th Dirichlet kernel.

\smallskip
The main body of the paper deals with various estimates for the Fourier series of the form
$$
 \sum_{k=1}^\infty \frac{e^{ikx}}{k+\mu},
$$
which are a particular case of the Lerch Zeta function $\zeta(s, e^{x/2\pi},\mu)$ with $s=1$.
As a convenient label, we use the term {\em Fej\'{e}r-Jackson series}\ (FJ series, for short) to  
refer to functions represented by infinite or finite series of this kind. 

Estimates for the FJ sums are based in most part on a Laplace transform representation.
This approach is by no means new but it proved to be very efficient for the current purposes, particularly,
through the use of the comparison monotone function 
$M(t)$ which is the Laplace transform of $(1+s^2)^{-1/2}$.

\smallskip
The final Section~\ref{sec:LogTaylor} presents another application of our general inequalities:
an explicit (no hidden constants) error estimate of $n$-th Taylor approximation of $\log(1-z)$ with uniform $O(|z|^{n+1}|\log|1-z||)$ behaviour in the unit circle.

\smallskip
Our subject, of course, has deep roots in classical literature. A brief review on material
directly related to Theorem~\ref{thm:myFJT} is given in Section~\ref{sec:fjt}. Quite a few papers 
and books cited therein contain excellent survey sections, which reveal various connections and can be consulted by the
interested reader.

Other than in Section~\ref{sec:fjt}, references are given only when they are directly relevant to particular technical elements.

Needless to say, in a text on a classical topic like this one, no claim of novelty can be safely made with respect to any 
single formula. Many auxiliary results, even those called a Proposition or a Theorem, are easier to prove than
to locate a definitive reference. In any case, to the best of my knowledge, the main results --- Theorems~\ref{thm:myFJT},
\ref{thm:intD-sumsquares}, and \ref{thm:Logtaylor-simple}, \ref{thm:Logtaylor:log1-z} --- are not found in the
prior publications.  

Numerical experimentation, naturally, played a significant role in research leading to this paper. 
The classic authors of pre-computer age deserve our apology as they were greatly disadvantaged in this respect.

\section{Preliminaries on one-variable functions}

\subsection{Sine, cosine, and exponential integrals}

Let us recall the standard special functions \cite[Sec.~5.2]{Abramowitz-Stegun1964}:
the cosine integral and the regularized cosine integral
\begin{equation}
\label{ci}
 \Ci(t)=-\int_t^\infty\frac{\cos u}{u}\,du,
 \qquad
 \Cin(t)=\int_0^t\frac{1-\cos u}{u}\,du
 =\log t-\Ci(t)+\gamma
 , 
\end{equation}
the sine integral and the complementary sine integral 
\begin{equation}
\label{si}
 \Si(t)=\int_0^t\frac{\sin u}{u}
 \,du,
\qquad
 \si(t)=-\int_t^\infty\frac{\sin u}{u}\,du=\Si(t)-\frac{\pi}{2},
\end{equation}
and the exponential integral of an imaginary argument
\begin{equation}
\label{ei}
 \Ei_1(-it)=\int_t^\infty \frac{e^{iu}}{u}\,du=-\Ci(t)-i\,\si(t).
\end{equation}
The most convenient for our purposes will be the variant of the exponential integral 
$$
E(t)=\reE(t)+i \imE(t), 
$$
with
real and imaginary parts called the {auxiliary trigonometric integrals}
\begin{equation}
\label{aux-fg}
\ba{ l}\dst
\reE(t)=-\cos t\;\Ci(t)-\sin t\;\si(t), 
\\[0.5ex]\dst
\imE(t)\,=\;\sin t\;\Ci(t)-\cos t\;\si(t).
\ea
\end{equation}
Note that
\begin{equation}
\label{E-to-si}
\si(t)=\Re(i e^{it} E(t)).
\end{equation}

The function $E(t)$ has the integral representation
\begin{equation}
\label{E}
 \ba{rcl}
 E(t)&=& 
 \dst
 \int_t^\infty \frac{e^{i(u-t)}}{u}\,du=
 \int_0^\infty \frac{e^{iu}}{u+t}\,du=
 \int_0^\infty \frac{e^{iut}}{u+1}\,du.
 \ea
\end{equation}

\begin{proposition}
\label{prop:E}
The function $E(t)$ admits the Laplace transform representation
\begin{equation}
\label{lt:E}
 E(t)=\int_0^\infty \frac{e^{-tu}}{u-i}\,du.
\end{equation}
\end{proposition}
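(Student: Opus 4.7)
The plan is to derive the Laplace transform representation \eqref{lt:E} from the third integral form of $E(t)$ in \eqref{E}, namely $E(t) = \int_0^\infty \frac{e^{iut}}{u+1}\,du$, by rotating the contour of integration from the positive real axis to the positive imaginary axis. Fix $t>0$. The integrand $f(u) = e^{iut}/(u+1)$ is meromorphic in $\mathbb{C}$ with a single simple pole at $u=-1$, which lies outside the closed first quadrant. So Cauchy's theorem applied to the quarter-circle contour consisting of the segment $[0,R]$, the arc $\Gamma_R = \{Re^{i\theta} : 0\le\theta\le\pi/2\}$, and the segment from $iR$ back to $0$ along the imaginary axis, yields no residue contribution.

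Next I would show that the arc integral vanishes as $R\to\infty$. On $\Gamma_R$ one has $|e^{iut}| = e^{-tR\sin\theta}$ and $|u+1|\ge R-1$, so
\begin{equation*}
\left|\int_{\Gamma_R} \frac{e^{iut}}{u+1}\,du\right| \le \frac{R}{R-1}\int_0^{\pi/2} e^{-tR\sin\theta}\,d\theta = O\!\left(\frac{1}{tR}\right)
\end{equation*}
by the standard Jordan-lemma estimate $\int_0^{\pi/2}e^{-tR\sin\theta}\,d\theta\le\pi/(2tR)$. Hence by Cauchy's theorem
\begin{equation*}
\int_0^\infty \frac{e^{iut}}{u+1}\,du \;=\; \int_0^{i\infty} \frac{e^{iut}}{u+1}\,du.
\end{equation*}

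Finally I would parametrize the imaginary axis by $u=iv$ with $v\ge 0$, so that $du = i\,dv$ and $e^{iut} = e^{-vt}$, obtaining
\begin{equation*}
\int_0^{i\infty} \frac{e^{iut}}{u+1}\,du \;=\; \int_0^\infty \frac{i\,e^{-vt}}{iv+1}\,dv \;=\; \int_0^\infty \frac{e^{-vt}}{v-i}\,dv,
\end{equation*}
the last equality using the algebraic identity $i/(iv+1)=1/(v-i)$. This is the required representation \eqref{lt:E}.

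The only non-routine step is the justification of the contour rotation, and within that the only real issue is the decay of the arc integral; this reduces to a textbook Jordan-lemma bound and is the natural place where the hypothesis $t>0$ (implicit in the convergence of the original oscillatory integral) is used. Everything else is an algebraic change of variables.
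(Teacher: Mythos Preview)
Your proof is correct and takes essentially the same approach as the paper: both establish \eqref{lt:E} by rotating the contour of integration through a quarter-plane and checking that the arc contribution vanishes. The only cosmetic difference is direction---the paper starts from the Laplace integral and rotates down into the fourth quadrant to reach the oscillatory form \eqref{E}, while you start from \eqref{E} and rotate up into the first quadrant; your Jordan-lemma bound is also more explicit than the paper's brief appeal to Watson's lemma for the arc estimate.
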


\begin{proof}
By Watson's lemma, the integral of the analytic function $z\mapsto e^{-tz}/(z-i)$ over the arc $|z|=R$, $-\pi/2<\arg z<0$, is $o(1)$
as $R\to\infty$. Therefore the integration path $(0,+\infty)$
in the right-hand side of \eqref{lt:E} can be changed into $(0,-i\infty)$
and we get 
$$
 \int_0^\infty \frac{e^{-tu}\,du}{u-i}=
\int_0^\infty \frac{e^{itv}\,(-i\,dv)}{-iv-i}=
\int_0^\infty \frac{e^{itv}\,dv}{v+1}. 
$$ 
\end{proof}

\subsection{The comparison function $M$}

The real-valued function
\begin{equation}
\label{M}
 M(t)=\int_0^\infty \frac{e^{-tu}\,du}{|u-i|}=\int_0^\infty \frac{e^{-tu}\,du}{\sqrt{u^2+1}}, \qquad t>0,
\end{equation}
which majorizes $|E(t)|$ 
will play a prominent role in the sequel.

The function $M(t)$ satisfies the nonhomogeneous Bessel's differential equation of order zero, $$tM''+M'+tM=1,$$ and 
can be expressed in terms of the Bessel function of the 2nd kind and the Struve function \cite[(12.1.8)]{Abramowitz-Stegun1964}:
$$
M(t)=\frac{\pi}{2}(\mathrm{H}_0(t)-Y_0(t)). 
$$

It is immedately clear from the defining formula that $M$ is decreasing in $\RR_+$. It helps to  
know its asymptotic behaviour:
\begin{equation}
\label{asym:Mt-infty}
 M(t)\sim\frac{1}{t}-\frac{1}{t^3}+\frac{9}{t^5}+O(t^{-7})
 \quad\text{as $t\to\infty$},
\end{equation}
(following easily from the integral representation) and
\begin{equation}
\label{asym:Mt-0}
 M(t)=\ln t+C_1+t+O(t^2),
\quad\text{as $t\to 0^+$},
\end{equation}
where $C_1=\ln 2-\gamma\approx 0.116$. 

The asymptotics \eqref{asym:Mt-0} is a consequence of the known asymptotics in the theory of Bessel functions
or can be derived directly from the integral.

\subsection{Inequalities for one-variable functions}

\begin{proposition}
\label{prop:M-1}
The function $1/t-M(t)$ is positive and decreasing.
\end{proposition}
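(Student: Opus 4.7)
The plan is to write $1/t$ as a Laplace integral and combine it with the defining Laplace representation of $M(t)$ to express $1/t - M(t)$ as a single Laplace-type integral with a manifestly nonnegative weight. Then both positivity and monotonicity drop out of the sign of a simple integrand.

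First I would use the elementary identity $1/t = \int_0^\infty e^{-tu}\,du$ for $t>0$. Subtracting the definition \eqref{M} of $M(t)$ gives
\begin{equation*}
\frac{1}{t}-M(t)=\int_0^\infty e^{-tu}\left(1-\frac{1}{\sqrt{u^2+1}}\right)du.
\end{equation*}
Since $\sqrt{u^2+1}>1$ for $u>0$, the bracketed factor is strictly positive almost everywhere on $(0,\infty)$, so the integral is strictly positive. This settles positivity.

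For monotonicity I would differentiate under the integral sign (justified by the uniform exponential decay of the integrand on any half-line $t\geq t_0>0$):
\begin{equation*}
\frac{d}{dt}\left(\frac{1}{t}-M(t)\right)=-\int_0^\infty u\,e^{-tu}\left(1-\frac{1}{\sqrt{u^2+1}}\right)du.
\end{equation*}
The integrand is again strictly positive for $u>0$, so the derivative is strictly negative on $(0,\infty)$, proving that $1/t-M(t)$ is (strictly) decreasing.

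I do not anticipate any real obstacle here: the whole argument hinges on the single observation that $1-1/\sqrt{u^2+1}\geq 0$, and interchanging differentiation and integration is routine because of the factor $e^{-tu}$. If one wanted a completely self-contained write-up, the only item worth checking carefully is the differentiation under the integral, which can be handled by dominating $u\,e^{-tu}$ on $[t_0,\infty)$ by $u\,e^{-t_0 u}\in L^1(0,\infty)$.
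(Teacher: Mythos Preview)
Your proof is correct and follows essentially the same approach as the paper: both exhibit the Laplace representation $\frac{1}{t}-M(t)=\int_0^\infty e^{-tu}\bigl(1-\frac{1}{\sqrt{1+u^2}}\bigr)\,du$ with positive density, from which positivity and monotonicity are immediate. You have merely spelled out the differentiation step that the paper leaves implicit.
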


\begin{proof}
This follows from the Laplace transform representation with positive density
$$
\frac{1}{t}-M(t)=\int_0^\infty e^{-tu}\left(1-\frac{1}{\sqrt{1+u^2}}\right)\,du.
$$
\end{proof}

\begin{proposition}
\label{prop:E-si-acot-M}
(a) For all $t>0$ there holds the inequality 
\begin{equation}
\label{ineq:E-M}
|E(t)|<M(t).
\end{equation}

\noindent
(b) There exist $t_0$ and $t_1$, $0<t_1<t_0<1$, such that for all $t>t_0$
\begin{equation}
\label{ineq:M-acot}
 M(t)<\arccot t
\end{equation}
and for all $t>t_1$
\begin{equation}
\label{ineq:E-acot}
 |E(t)|<\arccot t.
\end{equation}
Numerically, the best values of $t_0$ and $t_1$ are $t_0\approx 0.7095667635$ and
$t_1\approx 0.4685633187$.

\smallskip\noindent
(c) For all $t\geq 0$
\begin{equation}
\label{ineq:si-acot}
|\si(t)|\leq \arccot t
\end{equation}
with equality only for $t=0$.
\end{proposition}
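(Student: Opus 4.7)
Part (a) follows immediately from applying the triangle inequality to the Laplace transform representation \eqref{lt:E}: $|E(t)| \leq \int_0^\infty e^{-tu}/|u-i|\,du = M(t)$. Strictness is automatic because $1/(u-i) = (u+i)/(u^2+1)$ has argument $\arctan(1/u)$, which varies with $u$, so the integrand is never a positive real multiple of a fixed complex number, and the triangle inequality cannot be tight.

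For part (b), I would combine asymptotic analysis at the two endpoints with continuity. As $t \to \infty$, \eqref{asym:Mt-infty} together with $\arccot t = 1/t - 1/(3t^3) + O(t^{-5})$ gives $\arccot t - M(t) = 2/(3t^3) + O(t^{-5}) > 0$, while as $t \to 0^+$ the integral \eqref{M} diverges, so $M(t) > \pi/2 > \arccot t$; the intermediate value theorem furnishes $t_0$, and a numerical check at $t = 1$ (where $M(1) < \pi/4$) places $t_0 < 1$. The same strategy handles $t_1$: write $|E(t)|^2 = \reE(t)^2 + \imE(t)^2$, use $\reE(t) \sim 1/t^2 - 6/t^4 + \cdots$ and $\imE(t) \sim 1/t - 2/t^3 + \cdots$ (from Watson's lemma applied to the Laplace representations of these auxiliary functions) to obtain $|E(t)| = 1/t - 3/(2t^3) + O(t^{-5})$ at infinity, while the divergence $\reE(t) \sim -\ln t$ at $0^+$ forces $|E(t)| \to \infty$. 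Continuity produces $t_1$, and the inclusion $t_1 < t_0$ follows from part (a), which gives $|E(t_0)| < M(t_0) = \arccot t_0$.

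Part (c) splits into three pieces. The case $t = 0$ is equality by inspection since $\si(0) = -\pi/2$ and $\arccot 0 = \pi/2$. For $t > t_1$, apply \eqref{E-to-si} to obtain $|\si(t)| = |\Re(ie^{it}E(t))| \leq |E(t)| < \arccot t$ from part (b). For $0 < t \leq t_1$, note $t_1 < 1 < \pi/2$, so $\Si(t) < \Si(\pi/2) < \pi/2$, giving $\si(t) < 0$; the desired inequality then becomes $\Si(t) > \arctan t$. Its derivative is $\sin t/t - 1/(1+t^2)$, which is positive on $(0, \pi/2)$ because the auxiliary function $\sin t(1+t^2) - t$ has Taylor expansion $\tfrac{5}{6}t^3 + O(t^5)$ at the origin and remains positive throughout $(0, \pi/2]$ by a direct calculus estimate. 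Integration from $0$ then yields $\Si(t) > \arctan t$ on $(0, t_1]$, closing the gap.

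The main obstacle is certifying the uniqueness of the crossings in part (b), so that the stated numerical values may be claimed as the \emph{best} constants rather than merely witnesses of existence. This would require a monotonicity result for $M(t) - \arccot t$ and for $|E(t)|^2 - (\arccot t)^2$ on suitable intervals, which is not immediate from the available integral representations and is the one non-routine step I anticipate.
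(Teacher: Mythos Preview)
Parts (a) and (c) are correct and follow essentially the same route as the paper: the triangle inequality on the Laplace representation for (a), and for (c) the split into ``large $t$'' via $|\si(t)|\le |E(t)|$ together with the derivative computation $\sin u/u - 1/(1+u^2)>0$ on a bounded interval near the origin.

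For (b) there is a gap, and it is slightly more serious than the one you flag at the end. The statement requires a $t_0$ such that $M(t)<\arccot t$ for \emph{all} $t>t_0$. Your asymptotic comparison only yields the inequality for $t$ beyond some unspecified threshold $T$ (an $O$-term does not come with a constant), and a single numerical evaluation at $t=1$ does not cover the interval $(1,T)$. So without the monotonicity you mention, even the existence claim---not only the ``best $t_0$''---is unproved; the same applies to your handling of $t_1$. The paper closes this gap by replacing the asymptotic expansion with a hard inequality: three integrations by parts in \eqref{M} give $M(t)<t^{-1}-t^{-3}+9t^{-5}$ for all $t>0$, which together with $\arccot t>t^{-1}-\tfrac{1}{3}t^{-3}$ yields $M(t)<\arccot t$ for $t\ge 4$, and the remaining bounded interval $[1,4]$ is disposed of by interval analysis. (The paper, like you, does not prove uniqueness of the crossing; the ``best'' values $t_0$, $t_1$ are simply reported as numerical findings.)
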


\begin{proof}
(a) Obvious by comparing the Laplace integrals \eqref{M} and \eqref{lt:E}. 

(b) 
Integrating the right-hand side of \eqref{M} by parts three times we get
$$
%
 M(t)=\frac{1}{t}-\frac{1}{t^3}+\frac{1}{t^3}
 \int_0^\infty e^{-tu}\frac{3u(3-2u^2)}{(u^2+1)^{7/2}}\,du
<\frac{1}{t}-\frac{1}{t^3}+\frac{9}{t^5}.
$$
By Taylor's expansion of arctangent,
$$
 \arccot t>\frac{1}{t}-\frac{1}{3t^3}
$$
for $t\geq 1$, hence $M(t)<\arccot t$ at least for (say) $t\geq 4$. For $t\in [1,4]$
the inequality \eqref{ineq:M-acot} is verified by interval analysis (numerical evaluations on a finite grid
with controlled error). 

The existence of $t_0\in(0,4)$ for \eqref{ineq:M-acot} and $t_1\in (0, t_0)$ for \eqref{ineq:E-acot} is obvious.

The best values of $t_0$ (resp. $t_1$) are found numerically as the unique 
positive root of the equation $M(t)=\arccot t$ (resp. $|E(t)|=\arccot t$).

\smallskip
(c) Since $|\si(t)|\leq |E(t)|$, in view of (a) and (b) it remains to check the inequality
\eqref{ineq:si-acot} for $0\leq t \leq 1$ (say). In this interval $\si(t)<0$ and
$$
 \arccot t-|\si(t)|=\int_0^t\left(\frac{\sin u}{u}-\frac{1}{1+u^2}\right)\,du.
$$
Positivity of the integrand follows from the estimates
$\sin u>u-u^3/6$ and $(1+u^2)(1-u^2/2)<1$ valid for $0< u<1$. 
\end{proof}

For small values of $t$ the estimate \eqref{ineq:M-acot} diverges as $t^{-1}$,
which is worse than the asymptotics \eqref{asym:Mt-0}. In the next theorem we give
a non-asymptotic estimate of the true order of magnitude as $t\to 0^+$.
The case of small $t$ will be used, via part (c) of Theorem~\ref{thm:FJ-L-M}, in the proof
of Theorem~\ref{thm:Logtaylor:log1-z}.

\begin{proposition}
\label{prop:ubM-small-t}
For any $t>0$ the inequality 
\begin{equation}
\label{ineq:M-arcsh}
 M(t)<\arcsh\frac{1}{t}
\end{equation}
holds.
Therefore for $0<t<1$ the inequality 
$$
 M(t)<|\ln t|+C_2
$$
 with $C_2=\ln(1+\sqrt{2})<0.8814$ holds.
\end{proposition}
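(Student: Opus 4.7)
The plan is to reduce both sides to integrals in a common variable and exploit the monotonicity of a single kernel. Substituting $u = r/t$ in the Laplace integral \eqref{M} gives $M(t) = \int_0^\infty e^{-r}(r^2+t^2)^{-1/2}\,dr$, while the elementary identity $\arcsh(1/t) = \int_0^{1/t}(1+u^2)^{-1/2}\,du$ transforms under the same substitution into $\arcsh(1/t) = \int_0^1 (r^2+t^2)^{-1/2}\,dr$. With $f(r) = (r^2+t^2)^{-1/2}$ (strictly decreasing in $r$ for any fixed $t > 0$), the difference rearranges as
$$\arcsh(1/t) - M(t) = \int_0^1 (1-e^{-r}) f(r)\,dr - \int_1^\infty e^{-r} f(r)\,dr.$$

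The crux is the elementary coincidence
$$\int_0^1 (1-e^{-r})\,dr = e^{-1} = \int_1^\infty e^{-r}\,dr.$$
Because the two signed weights $1-e^{-r}$ on $(0,1)$ and $-e^{-r}$ on $(1,\infty)$ have equal total mass, subtracting the constant $f(1)$ from $f(r)$ inside each integral leaves the expression unchanged. The resulting integrands $(1-e^{-r})[f(r)-f(1)]$ on $(0,1)$ and $e^{-r}[f(1)-f(r)]$ on $(1,\infty)$ are both strictly positive on sets of positive measure, from which $M(t) < \arcsh(1/t)$ for every $t > 0$ follows.

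The logarithmic bound drops out at once: write $\arcsh(1/t) = -\ln t + \ln(1+\sqrt{1+t^2})$, observe that the second summand is increasing in $t$, and note that for $0 < t < 1$ one has $-\ln t = |\ln t|$; thus $\arcsh(1/t) < |\ln t| + \ln(1+\sqrt{2}) = |\ln t| + C_2$.

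I do not anticipate a genuine obstacle: the one idea to notice is the common scaling $u = r/t$ that places $M(t)$ and $\arcsh(1/t)$ on the same footing, after which the Chebyshev-style argument closes the matter. It rests on the fortunate coincidence that $\int_0^1(1-e^{-r})\,dr$ and $\int_1^\infty e^{-r}\,dr$ are equal; without this equality one would need more delicate estimates to compare the positive contribution on $(0,1)$ against the negative tail on $(1,\infty)$.
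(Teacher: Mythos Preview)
Your argument is correct. After the common rescaling $u=r/t$, both sides become integrals of the same strictly decreasing kernel $f(r)=(r^2+t^2)^{-1/2}$ against different weights, and the observation $\int_0^1(1-e^{-r})\,dr=\int_1^\infty e^{-r}\,dr=e^{-1}$ lets you subtract the constant $f(1)$ and leave two strictly positive integrals. The deduction of the logarithmic bound is the same as in the paper.

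The paper takes a different route to \eqref{ineq:M-arcsh}: it integrates by parts to obtain $tM(t)=1+\int_0^\infty e^{-ut}\,d(u^2+1)^{-1/2}$ and then inserts the tangent-line inequality $e^{-ut}>1-ut$ on $(0,1/t)$, discarding the tail; after one more integration by parts this yields $M(t)<\int_0^{1/t}(1+u^2)^{-1/2}\,du=\arcsh(1/t)$. Your approach avoids integration by parts entirely and replaces the pointwise inequality $e^{-x}>1-x$ by a mass-balance (Chebyshev-type) argument. The paper's method is perhaps more mechanical --- one reaches for the standard linear minorant of the exponential --- while yours exposes a structural reason why the comparison works: the weights $1-e^{-r}$ and $e^{-r}$ have equal total mass on either side of $r=1$, so any decreasing $f$ gives the same sign. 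Either way the work is short; your version has the slight advantage of being symmetric in $r\leftrightarrow$ kernel and not depending on the particular shape of $e^{-r}$ beyond that single integral identity.
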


\begin{proof}
Integration by parts leads to the identity
$$
 tM(t)=1+
 \int_0^\infty e^{-ut} d(u^2+1)^{-1/2}.
$$
Using the inequality $e^{-ut}>1-ut$ for $0<u<1/t$, we get
$$
\begin{array}{rcl}
\displaystyle
-\int_0^\infty e^{-ut} d(u^2+1)^{-1/2}
&\geq &
\displaystyle -\left.\frac{1-tu}{\sqrt{1+u^2}}\right|_0^{1/t}
+\int_0^{1/t} \frac{t\,du}{\sqrt{1+u^2}}
\\[3ex]
&=& \displaystyle
1-\int_0^{1/t}\frac{t\,du}{\sqrt{1+u^2}}
\end{array}
$$
Therefore
$$
 M(t)<\int_0^{1/t}\frac{du}{\sqrt{1+u^2}}
 =\arcsh\frac{1}{t},
$$
and we get the inequality \eqref{ineq:M-arcsh}.

Finally, 
$$
 (\arcsh x-\ln x)'=\frac{1}{\sqrt{1+x^2}}-\frac{1}{x}<0. 
$$
Hence, if $t>1$, then $\arcsh(1/t)-\ln(1/t)<\arcsh 1=\ln(1+\sqrt{2})$.
\end{proof}

\section{Sums of Fej\'{e}r-Jackson type}
\label{sec:FJsums}

\subsection{Definition of the FJ sums}
\label{ssec:def-FJsums}

First, we define the {\em 
exponential FJ sums}
\begin{equation}
\label{Lmu}
 L(x,\mu)=\sum_{k=1}^\infty \frac{e^{ikx}}{k+\mu},\qquad \mu\neq-1,-2,\dots,
\end{equation}
and the trunctated versions 
\begin{equation}
\label{Lmun}
 L_n(x,\mu)=\sum_{k=1}^n \frac{e^{ikx}}{k+\mu}.
\end{equation}

Note that
\begin{equation}
\label{L-S}
 L_n(x,\mu)=L(x,\mu)-e^{ix n}L(x,\mu+n).
\end{equation}

It will be convenient also to use a variant of this function with different normalization of the arguments,
\begin{equation}
\label{Llam}
 \Ll(x,\lambda)=e^{-ix}\,L\left(2x,\frac{\lambda-1}{2}\right)=2\sum_{k=1}^\infty
 \frac{e^{(2k-1)ix}}{2k+\lambda-1}.
\end{equation}

The real and imaginary parts in \eqref{Lmu} are the {\em cosine}\ and {\em sine FJ sums},
$$
 T(x,\mu)+iS(x,\mu)=\sum_{k=1}^\infty \frac{\cos kx}{k+\mu}
 +i\sum_{k=1}^\infty \frac{\sin kx}{k+\mu}.
$$
The finite sums $T_n(x,\mu)$ and $S_n(x,\mu)$ are defined similarly.

Doing the same for Eq.~\eqref{Llam} we have
$$
 \Tl(x,\lambda)+i\Sl(x,\lambda)=2\sum_{k=1}^\infty \frac{\cos (2k-1)x}{2k-1+\lambda}
 +2i\sum_{k=1}^\infty \frac{\sin (2k-1)x}{2k-1+\lambda}.
$$

\smallskip
Note the well known evaluations  ($-\pi<x<\pi$)
\begin{equation}
\label{evalSaw}
L(x,0)=
\,-\log\left|2\sin\frac{x}{2}\right|\,+\,i\frac{\pi-x}{2}
\end{equation}
and
\begin{equation}
\label{evalSgn}
%
\Ll\left(\frac{x}{2},0\right)=e^{-ix/2}L\left(x,-\frac{1}{2}\right)=
\log\left|\cot\frac{x}{4}\right|+i \frac{\pi}{2}\,\sign x.
\end{equation}

Another special case is $S(\pi,\mu)=0$ and $\Tl(\pi/2,\lambda)=0$ for any $\mu$, $\lambda$.
Introduce notation for the special values 
\begin{equation}
\label{Spi}
\Spi(\lambda)=\Sl\left(\frac{\pi}{2},\lambda\right)=-T\left(\pi,\frac{\lambda-1}{2}\right)
=2\sum_{k=1}^\infty\frac{(-1)^{k-1}}{2k-1+\lambda}.
\end{equation}
The function $\Spi(\lambda)$ can be expressed also in terms of the digamma function 
$$
 \Spi(\lambda)=\frac{1}{2}\left(\psi\left(\frac{\lambda+3}{4}\right)-\psi\left(\frac{\lambda+1}{4}\right)\right).
$$

\subsection{Laplace transform representation of the FJ sums}

\begin{proposition}
\label{prop:lt-L}
The identity
\begin{equation}
\label{lt:L}
L(x,\mu)=\int_0^\infty \frac{e^{-\mu u}\,du}{e^{u-ix}-1}
\qquad (\mu>-1, \;x\notin 2\pi\ZZ)
\end{equation}
holds.
Equivalently, for $\lambda>-1$
 \begin{equation}
\label{lt:Ll}
\Ll(x,\lambda)=L\left(2x,\frac{\lambda-1}{2}\right)e^{-ix}
=\int_0^\infty  \frac{e^{-\lambda u}\,du}{\sinh (u-ix)}.
\end{equation}
In particular,
\begin{equation}
\label{lt:Spi}
 \Spi(\lambda)=\int_0^\infty\frac{e^{-\lambda u}}{\cosh u}\,du.
\end{equation}
\end{proposition}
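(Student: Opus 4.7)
The plan is to derive \eqref{lt:L} from a geometric series expansion carried out at the level of partial sums, then obtain \eqref{lt:Ll} and \eqref{lt:Spi} by elementary transformations. First I would write $1/(k+\mu)=\int_0^\infty e^{-(k+\mu)u}\,du$, swap the \emph{finite} sum with the integral, and use the closed form
$$
\sum_{k=1}^n e^{-k(u-ix)}=\frac{1-e^{-n(u-ix)}}{e^{u-ix}-1},\qquad u>0,
$$
to obtain the rigorous identity
$$
L_n(x,\mu)=\int_0^\infty \frac{e^{-\mu u}\bigl(1-e^{-n(u-ix)}\bigr)}{e^{u-ix}-1}\,du.
$$

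Next I would pass to the limit $n\to\infty$ in this identity. The tail
$$
R_n=\int_0^\infty \frac{e^{-\mu u}\,e^{-n(u-ix)}}{e^{u-ix}-1}\,du
$$
is dominated in modulus by the $n$-independent function $\varphi(u)=e^{-\mu u}/|e^{u-ix}-1|$. For $x\notin 2\pi\ZZ$ the denominator is bounded away from zero on a neighborhood of $u=0$, and at infinity $\varphi(u)\sim e^{-(\mu+1)u}$, so $\varphi$ is integrable on $(0,\infty)$ once $\mu>-1$. Since the integrand tends to zero pointwise, dominated convergence gives $R_n\to 0$. Dirichlet's test ensures $L_n\to L$ (for $x\notin 2\pi\ZZ$, the partial sums of $e^{ikx}$ are bounded and $1/(k+\mu)\searrow 0$), and \eqref{lt:L} follows.

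For \eqref{lt:Ll} I would substitute $\mu=(\lambda-1)/2$ into \eqref{lt:L}, perform the change of variable $u\mapsto 2v$, and simplify using
$$
e^{2(v-ix)}-1=2\,e^{v-ix}\sinh(v-ix),
$$
absorbing the leftover $e^{-ix}$ against the prefactor in the definition \eqref{Llam}. The identity \eqref{lt:Spi} is then the specialization $x=\pi/2$: one has $\sinh(u-i\pi/2)=-i\cosh u$, so $\Ll(\pi/2,\lambda)=i\int_0^\infty e^{-\lambda u}/\cosh u\,du$, and the imaginary part recovers $\Spi(\lambda)$ in view of \eqref{Spi}. The main obstacle is really just the passage to the limit in the first step: the double integral $\int\!\sum$ is not absolutely convergent, since $e^{-\mu u}/(1-e^{-u})\sim 1/u$ near $u=0$, which prevents a direct use of Fubini and forces the partial-sum / dominated-convergence argument outlined above.
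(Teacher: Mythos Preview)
Your proof is correct and follows the same approach as the paper --- substituting $(k+\mu)^{-1}=\int_0^\infty e^{-(k+\mu)u}\,du$ and summing the geometric series --- but you supply the convergence justification (via partial sums and dominated convergence) that the paper's one-line proof omits. Your derivations of \eqref{lt:Ll} and \eqref{lt:Spi} are likewise correct and spelled out more fully than in the paper.
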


\begin{proof}
Eq. \eqref{lt:L} is obtained by substituting $(k+\mu)^{-1}=\int_0^\infty e^{-(k+\mu)u}\,du$ in the definition 
\eqref{Lmu} and summing the obtained geometric series under
the integral sign.
\end{proof}

\begin{remark*}
The function $L(x,\mu)$ is a particular case of the
{Lerch Zeta function} 
$$
 \zeta(s,t,\mu)=\sum_{k=0}^\infty \frac{e^{2\pi i m t}}{(k+\mu)^s}.
$$
Namely,
\[
L(x,\mu)= \zeta(1,x/2\pi,\mu).
\] 
Correspondingly, Eq. \eqref{lt:L} is a particular case of the integral representaion 
\cite[(25.14.5)]{NIST}
$$
\zeta(s,t,\mu)=\frac{1}{\Gamma(s)}\int_0^\infty \frac{u^{s-1} e^{-\mu u}}{1-e^{2\pi i t-u}}\,du.
$$
\end{remark*}

An integral representation equivalent to the imaginary part of \eqref{lt:L} is used in \cite{Fikioris-Andrianesis2015}
to explore the asymptotics of $S(x,0)-S_n(x,0)$ in the regimes $n\to\infty$, $x$ fixed and $n\to\infty$, $nx$ fixed. 

Equivalent representations in the form of Mellin-type integrals for $S_n(x,0)$ and $T_n(x,0)$ were
employed much earlier by Nikonov \cite{Nikonov1939} to describe the enveloping curve for extrema of the functions
$S_n(\cdot,0)$ and $T_n(\cdot,0)$.

Some further versions of the representations \eqref{lt:L}, \eqref{lt:Ll} will be useful.

Making the change of variable $z=u-ix$, we can rewrite \eqref{lt:L} so as to obtain the dependence on $x$ only in the
limit of integration,
\begin{equation}
\label{lt:L-ix}
e^{i\mu x}\,L(x,\mu)=\int_{-ix}^{-ix+\infty} \frac{e^{-\mu z}\,du}{e^{z}-1}.
\end{equation}
Similarly from \eqref{lt:Ll} we derive the representations
\begin{eqnarray}
\label{lt:Ll-ix}
e^{i\lambda x}\,\Ll(x,\lambda)&=&\int_{-ix}^{+\infty} \frac{e^{-\lambda z}\,dz}{\sinh z}
\\[2ex]
\label{lt:Ll-cosh}
&=&\pm i\,e^{\pm i\frac{\pi}{2}\lambda}\,\int_{-ix \pm i \frac{\pi}{2}}^{+\infty} \frac{e^{-\lambda z}\,dz}{\cosh z}.
\end{eqnarray}
These expressions are well suited for differentiation with respect to $x$.

\subsection{Integrated Dirichlet kernels}

We interpret the standard Dirichlet kernel
$$
 D_n(x)=1+2\sum_{k=1}^{n}\cos kx=\frac{\sin(n+\frac{1}{2})x}{\sin(x/2)} 
 \qquad (n=1,2,\dots)
$$
as a member of the family of functions $t\mapsto \sin(\lambda t)/\sin t$ with continuous parameter $\lambda>0$.

The identity 
$$
S_n'(x,0)=\frac{1}{2}(D_n(x)-1)
$$
 implies an expression of $S_n(x,0)$ 
in terms of the integrated Dirichlet kernel
\begin{equation}
\label{Sn-Dint}
 S_n(x,0)=\frac{1}{2}\int_0^x \left(D_n(t)-1\right)\,dt 
\end{equation}
or, equivalently,
\begin{equation}
\label{Sn-Dcint}
 S_n(x,0)=\int_0^{(\pi-x)/2} \frac{\cos(2n+1)t}{\cos t}\,dt.
\end{equation}

We introduce a family of functions 
\begin{equation}
\label{Dint-ss}
\Ssi(x,\lambda)=\int_0^x \frac{\sin\lambda t}{\sin t}\,dt,
\end{equation}
which will be useful in the analysis of more general FJ sums.
The sine integral $\Si(t)$ in \eqref{si} is a limiting case (see \eqref{Si-Ssi} below), which explains our choice of notation. 

Let us introduce notation for similar integrals with cosine in the denominator:
the complex-valued integral
\begin{equation}
\label{Eci}
\Eci(x,\lambda)=\int_0^x \frac{e^{it\lambda}}{\cos t}\,dt=\Cci(x,\lambda)+i\,\Sci(x,\lambda)
\end{equation}
and the corresponding real-valued integrals
\begin{equation}
\label{Dint-sc}
\Sci(x,\lambda)=\int_0^x \frac{\sin\lambda t}{\cos t}\,dt = \Im\,\Eci(x,\lambda)
\end{equation}
and
\begin{equation}
\label{Dint-cc}
\Cci(x,\lambda)=\int_0^x \frac{\cos\lambda t}{\cos t}\,dt= \Re\,\Eci(x,\lambda).
\end{equation}

\subsection{Limit relations}

\begin{proposition}
\label{prop:limrel-Dint}
(a) The exponential 
integral \eqref{E} is the limit case
of FJ sums:
if $\nu=x\mu$ is fixed and $\mu \to \infty$, 
or if $\nu=y\lambda$ and $\lambda \to \infty$,
then
\begin{equation}
\label{L-E}
L(x,\mu)\to E(\nu),
\qquad
\Ll(y,\lambda)\to E(\nu)
.
\end{equation}


(b) The trigonometric integral $\Si$ is limit case of integrated Dirichlet kernels:
if $\nu=x\lambda$ is fixed and $\lambda\to \infty$, 
then
\begin{equation}
\label{Si-Ssi}
\Ssi(x,\lambda) \to
\Si(\nu),
\end{equation}
%
\end{proposition}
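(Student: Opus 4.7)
The plan is to reduce all three limit relations to one scheme: rescale the variable of integration in the Laplace representation by $\mu$ (or $\lambda$), identify the pointwise limit of the integrand, and invoke dominated convergence.

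For part~(a), start from \eqref{lt:L}. Setting $x=\nu/\mu$ and substituting $u=v/\mu$ yields
\[
 L(\nu/\mu,\mu)=\int_0^\infty \frac{e^{-v}\,dv}{\mu\bigl(e^{(v-i\nu)/\mu}-1\bigr)}.
\]
Since $\mu(e^{w/\mu}-1)\to w$ pointwise, the integrand tends to $e^{-v}/(v-i\nu)$. The substitution $u=w/\nu$ in \eqref{lt:E} gives $E(\nu)=\int_0^\infty e^{-w}\,dw/(w-i\nu)$, which matches the candidate limit. The argument for $\Ll(\nu/\lambda,\lambda)\to E(\nu)$ is identical starting from \eqref{lt:Ll}, using $\lambda\sinh(w/\lambda)\to w$ in place of $\mu(e^{w/\mu}-1)\to w$.

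For part~(b), substitute $s=\lambda t$ in \eqref{Dint-ss}:
\[
 \Ssi(\nu/\lambda,\lambda)=\int_0^\nu \frac{\sin s\,ds}{\lambda\sin(s/\lambda)}.
\]
The pointwise convergence $\lambda\sin(s/\lambda)\to s$ sends the integrand to $\sin s/s$, whose integral on $[0,\nu]$ is $\Si(\nu)$.

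The only technical point, and therefore the main obstacle, is producing an integrable majorant for dominated convergence. For~(a), take $\mu$ large enough that $|\nu|/\mu\le 1$ and split the integration range at $v=\mu$. On $[0,\mu]$ the argument $(v-i\nu)/\mu$ stays in a bounded disk, where the elementary inequality $|e^{w}-1|\ge c|w|$ gives $|\mu(e^{(v-i\nu)/\mu}-1)|\ge c|v-i\nu|\ge c|\nu|$. On $(\mu,\infty)$ the estimate $|e^{(v-i\nu)/\mu}-1|\ge e^{v/\mu}-1\ge e-1$ yields $\mu|e^{(v-i\nu)/\mu}-1|\ge(e-1)\mu$. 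Combined, the integrand is bounded by a $\mu$-independent constant times $e^{-v}$, which is integrable. For~(b), the inequality $|\sin w|\ge(2/\pi)|w|$ for $|w|\le\pi/2$ gives $|\lambda\sin(s/\lambda)|\ge(2/\pi)|s|$ once $\lambda\ge 2|\nu|/\pi$, so the integrand is uniformly bounded by $\pi/2$ on the compact interval $[0,\nu]$. Dominated convergence then finishes both parts; the delicate bookkeeping concerns the zero of the denominator at $v=0$ (resp.\ $s=0$), and this is handled by the nonvanishing of $\nu$ and the lower bounds above.
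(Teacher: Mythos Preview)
Your proof is correct and, for part~(a), follows exactly the paper's route: rescale the integration variable in the Laplace representation \eqref{lt:L} (resp.\ \eqref{lt:Ll}) and pass to the limit by dominated convergence. The paper merely asserts that the dominated convergence hypotheses are ``obviously met''; you supply the majorant explicitly, and your split at $v=\mu$ together with $|e^w-1|\ge c|w|$ on a bounded disk is a clean way to handle it.

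For part~(b) there is a small but genuine difference. The paper does not rescale; it argues by the squeeze $\Si(x\lambda)<\Ssi(x,\lambda)<(1+\eps)\Si(x\lambda)$, read off from $\sin t<t<(1+\eps)\sin t$ for small $t$. As literally written that squeeze is shaky, since $\sin\lambda t$ changes sign on $[0,x]$ and the pointwise inequality between integrands does not transfer directly to the integrals. Your substitution $s=\lambda t$ together with the Jordan-type bound $\lambda\sin(s/\lambda)\ge(2/\pi)s$ gives a uniform integrable majorant on the compact interval $[0,\nu]$ and is both more systematic (same scheme as in~(a)) and more rigorous.
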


(c) Under the same assumptions, the limit relation for $\Eci(x,\lambda)$ is
\begin{equation}
\label{Eci-lim}
\lambda\,\Eci(x,\lambda)\to i(1-e^{i\nu}).
\end{equation}

\begin{proof}
(a) Re-scaling the variable of integration $u\mapsto \mu u$ in \eqref{lt:L}, we obtain the integrand that
tends pointwise to the integrand in \eqref{lt:E} as $x\mu=\const$ and $x\to 0$. The conditions of 
the dominated convergence theorem are obviously met and we get the first limit relation in \eqref{Si-Ssi},
hence the others. 

(c) Similarly, rescale the integration variable in \eqref{Eci}.

\smallskip
(b) If $x<(1+\eps)\sin x$, then clearly $\Si(x\lambda)<\Ssi(x,\lambda)<(1+\eps)\,\Si(x\lambda)$.
Thus \eqref{Si-Ssi} follows .

\end{proof}

As a remark to the limit relations \eqref{L-E} let us look at the differential equation 
satisfied by the exponential integral \eqref{E}, 
\begin{equation}
\label{difeq:E}
 E'(t)=-\int\frac{e^{-tu}\,u\,du}{u-i}=-iE(t)-\frac{1}{t}.
\end{equation}

Its closer analog among the two, involving either the function $L$ or $\Ll$,
is the latter, 
\begin{equation}
\label{difeq:Ll}
\Ll'_x(x,\lambda)
=-i\lambda \Ll(x,\lambda)-\frac{1}{\sin x}. 
\end{equation}
It  is most easily derived using Eq.~\eqref{lt:Ll-ix}.

\subsection{FJ sums and integrated Dirichlet kernels}

A motivation for formulas to be presented in this subsection
comes from the familiar relation between $D_n(x)$ and $S_n(x,0)$ taking \eqref{L-S} into account,
\begin{equation}
\label{tSn-Dint}
\Im (e^{ixn} L(x,n))=\frac{\pi-x}{2}-S_n(x)=\frac{\pi}{2}-\int_0^x\frac{\sin\left(\left(n+\frac{1}{2}\right)t\right)}{2\sin\frac{t}{2}}\,dt,
\end{equation}
and a  similar representation for the remainder of the cosine Fourier series $T(x,0)$.
Recall the  
Fej\'{e}r's identity \cite[p.886,Eq.(1.2.12)]{MilRas1991}
$$
\sum_{k=1}^n\sin kx-\frac{1}{2}\sin nx=\frac{1}{2}\cot\frac{x}{2}(1-\cos nx),
$$ 
which, in view of the evaluation \eqref{evalSaw} of $T(x)$, is equivalent to
$$
(T_n(x)-T(x))'=\frac{\cot\frac{x}{2}\cos nx-\sin nx}{2} =\frac{\cos\left(n+\frac{1}{2}\right)x}{2\sin\frac{x}{2}}.
$$
Integrating and taking \eqref{L-S} into account, we get
\begin{equation}
\label{tTn-Dint}
\Re(e^{ixn} T(x,n))=\int_x^\pi \frac{\cos\left(n+\frac{1}{2}\right)t}{2\sin\frac{t}{2}}\,dt \;+\,e^{i\pi n}\,T(\pi,n).
\end{equation}

The next proposition asserts that the identities \eqref{tSn-Dint} and \eqref{tTn-Dint} remain
valid in the case of a continuous parameter. 

\begin{proposition}
\label{prop:L-Eci}
For any $\mu>-1$ and $0<x<2\pi$ the following identities (with $x$-independent right-hand sides) hold:
\begin{equation}
\label{Im-eL}
\Im(e^{ix\mu}L(x,\mu))
+\int_0^{x}\frac{\sin(\mu+\frac{1}{2})y}{2\sin \frac{y}{2}}\,dy=\frac{\pi}{2}
\end{equation}
and
\begin{equation}
\ba{rl}
\label{Re-eL}
\dst
\Re(e^{ix\mu}L(x,\mu))
-\int_x^\pi\frac{\cos(\mu+\frac{1}{2})y\,dy}{2\sin\frac{y}{2}}
&\dst
=
\Re(e^{i\pi\mu}T(\pi,\mu))
\\[1ex] &\dst
=-\Spi(2\mu+1)\,\cos\pi\mu .
\ea
\end{equation}
\end{proposition}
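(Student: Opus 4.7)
The plan is to verify both identities by showing that each left-hand side is independent of $x$, and then to evaluate the constant at a convenient value of $x$.

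The first step is to differentiate $e^{i\mu x}L(x,\mu)$ in $x$ using the shifted contour representation \eqref{lt:L-ix}. Since the integration contour $-ix+[0,\infty)$ moves rigidly as $x$ varies and the integrand is holomorphic, the derivative is obtained entirely from the lower endpoint:
\begin{equation*}
\frac{d}{dx}\bigl[e^{i\mu x}L(x,\mu)\bigr]
= i\cdot\frac{e^{-\mu(-ix)}}{e^{-ix}-1}
= \frac{ie^{i\mu x}}{e^{-ix}-1}
= -\frac{e^{i(\mu+\frac12)x}}{2\sin(x/2)},
\end{equation*}
after multiplying numerator and denominator by $e^{ix/2}$. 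Separating real and imaginary parts gives the two derivative identities
\begin{equation*}
\frac{d}{dx}\Im\bigl(e^{i\mu x}L(x,\mu)\bigr)=-\frac{\sin(\mu+\tfrac12)x}{2\sin(x/2)},
\qquad
\frac{d}{dx}\Re\bigl(e^{i\mu x}L(x,\mu)\bigr)=-\frac{\cos(\mu+\tfrac12)x}{2\sin(x/2)}.
\end{equation*}
Comparison with the derivatives of the integrals on the left-hand sides of \eqref{Im-eL} and \eqref{Re-eL} shows that both combinations are constant in $x\in(0,2\pi)$.

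To evaluate the constant in \eqref{Im-eL}, I would pass to the limit $x\to 0^+$. The integral tends to $0$, and for the remaining term write
$\Im(e^{i\mu x}L(x,\mu))=\sin(\mu x)\,\Re L(x,\mu)+\cos(\mu x)\,\Im L(x,\mu).$
The first piece is $O(x\log(1/x))\to 0$, using $\Re L(x,\mu)=O(\log(1/x))$ from the sawtooth evaluation \eqref{evalSaw} plus a convergent correction. For the second piece, the Abel-type telescoping
$S(x,\mu)-S(x,0)=-\mu\sum_{k\ge 1}\sin(kx)/(k(k+\mu))$
converges absolutely and tends to $0$ as $x\to 0^+$, whereas $S(x,0)\to\pi/2$ by \eqref{evalSaw}. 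Thus the constant equals $\pi/2$.

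For \eqref{Re-eL}, I would evaluate at $x=\pi$, where the integral vanishes. Since $S(\pi,\mu)=\sum_k \sin(k\pi)/(k+\mu)=0$, we have $L(\pi,\mu)=T(\pi,\mu)\in\RR$, so $\Re(e^{i\pi\mu}L(\pi,\mu))=\cos(\pi\mu)\,T(\pi,\mu)$. The definition \eqref{Spi} gives $\Spi(2\mu+1)=-T(\pi,\mu)$, yielding the stated right-hand side $-\Spi(2\mu+1)\cos\pi\mu$. The main subtle point is justifying the endpoint-only derivative of the shifted contour integral (which is routine from Watson's lemma style decay, as already used in the proof of Proposition~\ref{prop:E}) and the delicate $x\to 0^+$ analysis of $\Im(e^{i\mu x}L(x,\mu))$; the rest is bookkeeping.
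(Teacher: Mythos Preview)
Your proof is correct and follows essentially the same approach as the paper: compute the $x$-derivative of $e^{i\mu x}L(x,\mu)$ from the shifted Laplace representation \eqref{lt:L-ix}, conclude that both left-hand sides are constant, evaluate \eqref{Re-eL} at $x=\pi$, and determine the constant in \eqref{Im-eL} via $x\to 0^+$ using the uniformly convergent difference $S(x,\mu)-S(x,0)$. You are in fact a little more careful than the paper at the last step, since you explicitly dispose of the term $\sin(\mu x)\,T(x,\mu)=O(x\log(1/x))$, which the paper leaves implicit.
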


\begin{proof}
From the integral representation \eqref{lt:L-ix} it follows that
$$
 \frac{d}{dx} (e^{ix\mu} L(x,\mu))
 =-\frac{e^{ix\left(\mu+\frac{1}{2}\right)}}{2\sin(x/2)}
$$
and it becomes obvious that the left-hand sides in  \eqref{Im-eL}, \eqref{Re-eL} have zero derivatives with respect to $x$.
It remains to evaluate the constants (w.r.to~$x$) in the right-hand sides.
For \eqref{Re-eL} we simply take $x=\pi$. 
For \eqref{Im-eL} we let $x\to 0^+$ and use the evaluation
$$
\lim_{x\to 0^+} S(x,\mu)=\frac{\pi}{2}.
$$
It can be obtained in different ways. Here is one. 

We assume that for $\mu=0$ the result is known. Now,
$$
\lim_{x\to 0^+} (S(x,\mu)-S(x,0))=\lim_{x\to 0^+}\sum_{k=1}^\infty\left(\frac{1}{k+\mu}-\frac{1}{k}\right)\sin kx
=
0,
$$
since the series converges uniformly in $x$.
\end{proof}

Next we will give an expression for the integrals with cosine in the denominator. Notice that the integral
in \eqref{Im-eL} can be written as $\int_0^x=\int_0^{\pi}-\int_x^\pi$, since there is no singularity at $x=\pi$.
Having done this, we combine \eqref{Im-eL} and \eqref{Re-eL} and conclude that the expression
$$
  e^{ix\mu} L(x,\mu)-\int_x^\pi \frac{e^{i\left(\mu+\frac{1}{2}\right)y}}{2\sin\frac{y}{2}}\,dy
$$
does not depend on $x$. Thus we come to the formula
$$
 \int_x^\pi \frac{e^{i\left(\mu+\frac{1}{2}\right)y}}{2\sin\frac{y}{2}}\,dy=
 e^{ix\mu} L(x,\mu)-e^{i\pi\mu} L(\pi,\mu).
$$
Setting $x=\pi-2\xi$, $y=\pi-2t$, and $\lambda=2\mu+1$, we get after some simplification
%
$$
\int_0^\xi \frac{i \,e^{-i t\lambda}}{\cos t}\,dt=
 e^{-2i\xi\mu} L(\pi-2\xi,\mu)-L(\pi,\mu).
$$

Playing with the obtained formula a little further, we come to the result stated in the next Proposition.
In the formulation we used the function $\Ll$ instead of $L$ and the notation \eqref{Spi}.
The integral representation comes from \eqref{lt:Ll-cosh}.

\begin{proposition}
\label{prop:Eci-L}
The function $\Eci(x,\lambda)$ defined by the integral \eqref{Eci} is related to the function \eqref{Llam}
as follows:
\begin{equation}
\label{prop:Eci-L}
\Eci(x,\lambda)
=e^{ix \lambda} \Ll\left(x-\frac{\pi}{2},\lambda\right)+
i \Spi(\lambda). 
\end{equation}
It admits the Laplace transform representation
\begin{equation}
\label{lt:Eci}
\Eci(y,\lambda)=i\int_0^\infty e^{-\lambda u}
 \left(\frac{1}{\cosh u}-
\frac{e^{i\lambda y}}{\cosh(u-i y)}\right)\,du.
\end{equation}
\end{proposition}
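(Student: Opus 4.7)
The plan is to prove the two claims in sequence. For the identity \eqref{prop:Eci-L}, I would use the ODE approach via \eqref{difeq:Ll} rather than continuing the chain of substitutions hinted at in the paragraph preceding the statement. Differentiating the right-hand side in $x$ and substituting $\Ll'_x(x-\pi/2,\lambda) = -i\lambda\Ll(x-\pi/2,\lambda) + 1/\cos x$ (since $\sin(x-\pi/2) = -\cos x$), the $\Ll$-terms cancel and the derivative reduces to $e^{ix\lambda}/\cos x$, which is exactly $\Eci'_x(x,\lambda)$. To pin down the additive constant, I would evaluate at $x=0$: the left-hand side vanishes, and in the right-hand side the relation $e^{-(2k-1)i\pi/2} = -i\,(-1)^{k-1}$ applied directly to the series \eqref{Llam} gives $\Ll(-\pi/2,\lambda) = -i\Spi(\lambda)$, so the right-hand side also vanishes. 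This route bypasses the bookkeeping with $\mu = (\lambda-1)/2$ implicit in the preceding derivation, though that derivation of course yields the same identity.

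For the Laplace representation \eqref{lt:Eci}, I would feed \eqref{lt:Ll-cosh} back into the identity just proved. Taking the $-$ sign in \eqref{lt:Ll-cosh} and replacing $x$ by $x - \pi/2$, the lower endpoint $-i(x-\pi/2) - i\pi/2$ collapses to $-ix$; multiplying through by $e^{i\pi\lambda/2}$ to convert $e^{i\lambda(x-\pi/2)}$ on the left into $e^{ix\lambda}$, the two exponential factors $-i$ and $e^{\mp i\pi\lambda/2}$ cancel cleanly, yielding $e^{ix\lambda}\Ll(x-\pi/2,\lambda) = -i\int_{-ix}^{+\infty} e^{-\lambda z}/\cosh z\,dz$. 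Sliding the horizontal contour down to the real axis via $z = u - ix$ --- justified because $1/\cosh z$ is holomorphic in the strip $|\Im z| < \pi/2$ and decays exponentially at $+\infty$, so Cauchy's theorem applies with a vanishing vertical arc at infinity --- converts this to $-ie^{ix\lambda}\int_0^\infty e^{-\lambda u}/\cosh(u - ix)\,du$. Adding $i\Spi(\lambda)$ in the Laplace form \eqref{lt:Spi} and relabelling $x$ as $y$ produces \eqref{lt:Eci}.

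The only substantive obstacle is sign bookkeeping: choosing the correct sign in the two-sign formula \eqref{lt:Ll-cosh} so that the lower limit lands on $-iy$ rather than $-iy + i\pi$, and carefully tracking how the resulting $\pm i e^{\mp i\pi\lambda/2}$ factors conspire with the overall $e^{ix\lambda}$ to leave only $-i$. Once the signs are right, the contour shift is routine and Part~1 reduces to a single ODE verification with a series-level boundary check.
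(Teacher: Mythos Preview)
Your argument is correct. For the first identity you take a genuinely different route from the paper: the paper derives \eqref{prop:Eci-L} by continuing the substitution chain in the paragraphs preceding the statement (combining \eqref{Im-eL} and \eqref{Re-eL}, setting $x=\pi-2\xi$, $\lambda=2\mu+1$, and then rewriting in terms of $\Ll$), whereas you match derivatives via the ODE \eqref{difeq:Ll} and fix the constant by a direct series evaluation at $x=0$. Your approach is cleaner and self-contained --- it avoids the $\mu\leftrightarrow\lambda$ bookkeeping entirely --- while the paper's approach has the virtue of exhibiting the identity as a reformulation of the already-proved Proposition~\ref{prop:L-Eci}.

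For the Laplace representation you follow the paper's hint and go through \eqref{lt:Ll-cosh}; this works, but a shorter path is to substitute $x\mapsto x-\pi/2$ directly into \eqref{lt:Ll} and use $\sinh(w+i\pi/2)=i\cosh w$, which gives $\Ll(x-\pi/2,\lambda)=-i\int_0^\infty e^{-\lambda u}/\cosh(u-ix)\,du$ in one line without any sign-choice or contour considerations. Your remark about ``sliding the horizontal contour down to the real axis'' is a slight misdescription: the substitution $z=u-ix$ is a parametrization of the horizontal contour, not a deformation, so the appeal to Cauchy's theorem is unnecessary (though harmless).
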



\section{Main inequalities}
\label{sec:main_ineq}

\subsection{Majorization of the FJ sums}
\label{ssec:majorL}

\begin{theorem} 
\label{thm:FJ-L-M} 
(a) For $0<x<\pi$ the inequalitiy
\begin{equation}
\label{ineq:L-M} 
|L(x,\mu)|<M\left((2\mu+1)\sin \frac{x}{2}\right)
\qquad (\mu>-1/2)
\end{equation}
and its equivalent
\begin{equation}
\label{ineq:Ll-M} 
|\Ll(x,\lambda)|<M\left(\lambda\sin x\right)
\qquad (\lambda>0)
\end{equation}
hold.

(b) If $0< x<\pi$ and $\lambda\sin x>t_0$, where $t_0\approx 0.71\dots$ is 
the threshold constant defined in Proposition~\ref{prop:E-si-acot-M}(b), 
then  the inequality
\begin{equation}
\label{ineq:F-acot}
 \left|\Ll(x,\lambda)\right|<\arccot(\lambda\sin x)
\end{equation}
holds.

(c) If $(2\mu+1)\sin \frac{x}{2}<1$, then 
\begin{equation}
\label{ineq:L-log}
 \left|L(x,\mu)\right|<\ln\frac{1}{(2\mu+1)\sin \frac{x}{2}}+C_2,
\end{equation}
where $C_2=\ln(1+\sqrt{2})\approx 0.88$ as in Proposition~\ref{prop:ubM-small-t}.

\end{theorem}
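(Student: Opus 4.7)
The plan is to establish part~(a) directly from the Laplace transform representation of $\Ll$ in Proposition~\ref{prop:lt-L}, after which parts (b) and (c) fall out as immediate corollaries by chaining the resulting bound with Propositions~\ref{prop:E-si-acot-M}(b) and \ref{prop:ubM-small-t} respectively.

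For part~(a), I would begin with \eqref{lt:Ll} and apply the triangle inequality to get $|\Ll(x,\lambda)|\le \int_0^\infty e^{-\lambda u}/|\sinh(u-ix)|\,du$. Expanding $\sinh(u-ix)=\sinh u\,\cos x-i\cosh u\,\sin x$ yields the identity $|\sinh(u-ix)|^2=\sinh^2 u+\sin^2 x$. The crucial pointwise bound is the elementary convexity estimate $\sinh u\ge u$, strict for $u>0$, which gives $|\sinh(u-ix)|\ge\sqrt{u^2+\sin^2 x}=\sin x\,\sqrt{(u/\sin x)^2+1}$. The substitution $v=u/\sin x$ then converts the resulting majorant of $|\Ll(x,\lambda)|$ precisely into the integral defining $M(\lambda\sin x)$ in \eqref{M}. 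Strict inequality persists because the pointwise bound is strict throughout $(0,\infty)$, so the integrated inequality is strict as well. This establishes \eqref{ineq:Ll-M}. The equivalent form \eqref{ineq:L-M} follows at once by taking absolute values in the identity $\Ll(x,\lambda)=e^{-ix}L(2x,(\lambda-1)/2)$ from \eqref{Llam} and specializing $\lambda=2\mu+1$, $x\mapsto x/2$; the hypothesis $\mu>-1/2$ matches $\lambda>0$ and the requirement $\lambda\sin x>0$ for $M$ to be evaluated on its domain of definition.

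Parts (b) and (c) are then immediate. Part (b) chains \eqref{ineq:Ll-M} with Proposition~\ref{prop:E-si-acot-M}(b) evaluated at $t=\lambda\sin x>t_0$ to give $|\Ll(x,\lambda)|<M(\lambda\sin x)<\arccot(\lambda\sin x)$. Part (c) chains \eqref{ineq:L-M} with the logarithmic majorant $M(t)<|\ln t|+C_2$ on $(0,1)$ from Proposition~\ref{prop:ubM-small-t}, applied at $t=(2\mu+1)\sin(x/2)<1$, which yields \eqref{ineq:L-log}.

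There is no real obstacle: the entire argument consists of one short manipulation of the Laplace representation followed by pure chaining. The one conceptual point worth emphasizing is that the inequality $\sinh u\ge u$ is exactly the right comparator, as it both matches the shape $\sqrt{\,\cdot^{\,2}+1}$ in the defining integral of $M$ and produces the rescaling by $\sin x$ under which the argument $\lambda\sin x$ (rather than some other trigonometric quantity) emerges naturally.
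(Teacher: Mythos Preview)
Your proposal is correct and follows essentially the same route as the paper: the paper's proof of (a) is precisely the chain $|\Ll(x,\lambda)|<\int_0^\infty e^{-\lambda u}/|\sinh(u-ix)|\,du$, the identity $|\sinh(u-ix)|^2=\sinh^2 u+\sin^2 x$, and the comparison $\sinh u>u$ leading to $M(\lambda\sin x)$; parts (b) and (c) are likewise deduced by chaining with Propositions~\ref{prop:E-si-acot-M}(b) and~\ref{prop:ubM-small-t}. Your version simply makes explicit a few details (the expansion of $\sinh(u-ix)$, the rescaling $v=u/\sin x$, and the passage between the $L$ and $\Ll$ formulations via~\eqref{Llam}) that the paper leaves implicit.
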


\begin{proof}
By Proposition~\ref{prop:E-si-acot-M}, (a) $\Rightarrow$ (b),
and 
by Proposition~\ref{prop:ubM-small-t}, (a) $\Rightarrow$ (c).

Proof of (a): 
$$
|\sinh(u-ix)|^2=\sinh^2 u+\sin^2 x>u^2+\sin^2 x,
$$
hence
$$
|\Ll(x,\lambda)|< \int_0^\infty\frac{e^{-u\lambda}\,du}{|\sinh(u-ix)|}
<\int_0^\infty\frac{e^{-u\lambda}\,du}{\sqrt{u^2+\sin^2 x}}
< M(\lambda\sin x).
$$
\end{proof}

\begin{theorem}
\label{thm:ImF-arccot}
For any $\mu\geq -1/2$ and any $x\in[0,\pi]$ the inequality
\begin{equation}
\label{ineq:ImF-arccot}
\left|\Im\, e^{ix\mu} L(x,\mu)\right|\leq \arccot\left((2\mu+1)\sin \frac{x}{2}\right)
\end{equation}
holds. The equality takes place if $\mu=-1/2$ or $x=0$. 
%
\end{theorem}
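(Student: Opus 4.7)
The plan is to rewrite the inequality using the $\Ll$-normalization and then treat two regimes separately. Setting $\lambda = 2\mu + 1 \geq 0$ and $y = x/2 \in [0,\pi/2]$, the identity $e^{ix\mu}L(x,\mu) = e^{i\lambda y}\Ll(y,\lambda)$ (from \eqref{Llam}) recasts the theorem as $|\Im(e^{i\lambda y}\Ll(y,\lambda))| \leq \arccot(\lambda\sin y)$.

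For the main regime $\lambda\sin y \geq t_0$, I would combine Theorem~\ref{thm:FJ-L-M}(a), $|\Ll(y,\lambda)| < M(\lambda\sin y)$, with Proposition~\ref{prop:E-si-acot-M}(b), $M(t) \leq \arccot(t)$ for $t \geq t_0$, to obtain strict inequality directly. For the remaining case $\lambda\sin y < t_0$, I invoke Proposition~\ref{prop:L-Eci} (after substituting $y' = 2t$ in \eqref{Im-eL}) to rewrite $\Im(e^{i\lambda y}\Ll(y,\lambda)) = \pi/2 - \Ssi(y,\lambda)$. The claim then becomes
\[
\arctan(\lambda\sin y) \leq \Ssi(y,\lambda) \leq \pi - \arctan(\lambda\sin y).
\]
Here $\lambda y \leq t_0\pi/2 < \pi/2$ (via Jordan's inequality $\sin y \geq 2y/\pi$), so $\sin(\lambda t) \geq 0$ throughout $[0,y]$. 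The upper bound is immediate from the crude estimate $\Ssi(y,\lambda) \leq \int_0^y \lambda t/\sin t \, dt \leq \lambda y^2/\sin y \leq t_0\pi^2/4$, which is well below $\pi - \arctan(t_0)$.

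For the lower bound, since both sides vanish at $y = 0$, it is enough to show monotonicity in $y$ of their difference; the required inequality is $(1 + \lambda^2\sin^2 y)\sin(\lambda y) \geq \lambda\sin y\cos y$. For $\lambda \leq 2$ this follows from $\sin(\lambda y) \geq (\lambda/2)\sin(2y) = \lambda\sin y\cos y$, which is a consequence of the decreasing monotonicity in $\lambda$ of $2\sin(\lambda y)/\lambda$ (its derivative has the sign of $\lambda y\cos(\lambda y) - \sin(\lambda y) \leq 0$). For $\lambda > 2$, where necessarily $y < \pi/6$, the correction term $\lambda^2\sin^2 y \cdot \sin(\lambda y)$ restores positivity; a Taylor expansion at $y = 0$ yields $\lambda y^3(4 + 5\lambda^2)/6 > 0$ as the leading contribution. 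The equality cases are then direct: $\mu = -1/2$ (i.e. $\lambda = 0$) gives $\Im(\Ll(y,0)) = \pi/2 = \arccot(0)$ via \eqref{evalSgn}, and $x = 0$ gives $\pi/2 = \arccot(0)$ in the limit. The main obstacle I anticipate is promoting the leading-order Taylor argument for $\lambda > 2$ to a bound valid over the whole range, which may require a change to the variables $(u,v) = (\lambda y, \lambda\sin y)$ to fully exploit the global constraint $v \leq t_0$.
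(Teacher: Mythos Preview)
Your overall scheme matches the paper's: split at a threshold on $\lambda\sin y$, dispose of the large regime via $|\Ll|<M<\arccot$, and in the small regime rewrite $\Im(e^{i\lambda y}\Ll(y,\lambda))=\pi/2-\Ssi(y,\lambda)$ using \eqref{Im-eL}. Your crude upper bound on $\Ssi$ is adequate for the lower bound on $\Im$ (the paper instead reads off $\Im>0$ directly from positivity of the Laplace integrand in \eqref{lt:L-ix}, but either device works).

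The genuine gap is exactly where you flag it: the derivative inequality $(1+\lambda^2\sin^2 y)\sin(\lambda y)\ge\lambda\sin y\cos y$ for $\lambda>2$ is left with only a leading-order Taylor heuristic, which is not a proof over the whole range, and the suggested change of variables $(u,v)=(\lambda y,\lambda\sin y)$ does not by itself close it. In fact the inequality yields easily once you notice that in this regime $\lambda\sin y<\lambda y<\pi/2$, so $\sin(\lambda y)>\sin(\lambda\sin y)$ and $\cos y<1$; setting $c=\lambda\sin y$ it then suffices to show $(1+c^2)\sin c\ge c$ for $c\in[0,t_0]$, and this follows from $\sin c>c-c^3/6$ since $(1+c^2)(1-c^2/6)=1+\tfrac{5}{6}c^2-\tfrac{1}{6}c^4>1$ for $c<\sqrt{5}$. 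So your split at $\lambda=2$ was unnecessary.

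The paper bypasses the direct comparison altogether by inserting the sine integral as an intermediate: Proposition~\ref{prop:intsinsin-Si} gives $\Ssi(y,\lambda)>\Si(\lambda\sin y)$ via the one-line derivative check $\sin(\lambda y)>\sin(\lambda\sin y)\cos y$, and Proposition~\ref{prop:E-si-acot-M}(c) supplies $\Si(t)\ge\arctan t$ (equivalently $|\si(t)|\le\arccot t$). This two-step route needs no case analysis on $\lambda$ and each step is a clean monotone comparison; it is precisely the missing idea that would have replaced your Taylor argument.
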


\begin{proof} 
If $(2\mu+1)\sin(x/2)\geq 1 (>t_0)$, then the result follows from part (b) of Theorem~\ref{thm:FJ-L-M}. 

Suppose now that $(2\mu+1)\sin(x/2)< 1$.
Then $(\mu+1/2)x<\pi/2$ and $(\mu+1)x<\pi$. 
By \eqref{lt:L-ix},
$$
\Im\,\left( e^{i x\mu} L(x,\mu)\right)=\int_0^\infty e^{-\mu u} \frac{\Im \left\{e^{i x\mu}(e^{u+i x}-1)\right\}\,du}{|e^{u-ix}-1|^2}.
$$
The numerator of the integrand is positive, since
$$
e^u\sin x(\mu+1)-\sin x\mu> \sin x(\mu+1)-\sin x\mu=2\sin\frac{x}{2}
 \,\cos(2\mu+1)\frac{x}{2}>0.
$$
Thus we have the lower bound $\Im\, e^{i x\mu} L(x,\mu)>0$ in this case.

On the other hand, using the representation \eqref{Im-eL} and the estimate of Proposition~\ref{prop:intsinsin-Si} 
 (which is placed in Subsection~\ref{ssec:trig_ineq} for subject relevance)
 we get the upper bound
$$
\Im\, \left(e^{ix\mu} L(x,\mu)\right)=
\frac{\pi}{2}-\int_0^{x}\frac{\sin(\mu+\frac{1}{2})y}{2\sin \frac{y}{2}}\,dy
<\frac{\pi}{2}-\Si\left((2\mu+1)\sin\frac{x}{2}\right).
$$
By \eqref{ineq:si-acot}, the left-hand side is bounded by 
$\arccot\left((2\mu+1)\sin \frac{x}{2}\right)$.
\end{proof}

\begin{remark*}
A slightly coarser but often sufficient and more practical form of the estimate 
\eqref{ineq:L-M}
is
\begin{equation}
\label{ineq:L-frac}
\left|L(x,\mu)\right|\leq \frac{1}{(2\mu+1)\sin \frac{x}{2}}.
\end{equation}
\end{remark*}

\subsection{Some integral trigonometric inequalities}
\label{ssec:trig_ineq}

We begin with an estimate used in the proof of Theorem~\ref{thm:ImF-arccot}.
Compare with limit relation \eqref{Si-Ssi}.

\begin{proposition}
\label{prop:intsinsin-Si}
If $\lambda>0$, $0<x<\pi$, and $x\lambda<\pi/2$, then
$$
\int_0^{x}\frac{\sin \lambda y}{\sin y}\,dy>\mathrm{Si}(\lambda\sin x)
$$
\end{proposition}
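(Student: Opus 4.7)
The plan is to reduce the inequality to a one-variable monotonicity statement. Set
\[
f(x)=\int_0^{x}\frac{\sin\lambda y}{\sin y}\,dy-\Si(\lambda\sin x),
\]
so that $f(0)=0$. If I can establish $f'(x)>0$ on the interval of interest, the inequality $f(x)>0$ follows by the fundamental theorem of calculus.

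Differentiation is immediate: by the definition of $\Si$ and the chain rule,
\[
f'(x)=\frac{\sin\lambda x}{\sin x}-\frac{\sin(\lambda\sin x)}{\lambda\sin x}\cdot\lambda\cos x
=\frac{\sin\lambda x-\sin(\lambda\sin x)\cos x}{\sin x}.
\]
Since $\sin x>0$ on $(0,\pi)$, everything reduces to showing the numerator
\[
N(x)\;=\;\sin\lambda x-\sin(\lambda\sin x)\cos x
\]
is strictly positive under the hypotheses $0<x<\pi$ and $\lambda x<\pi/2$.

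The key observation is that $0<\sin x<x$ for $x>0$, which together with the assumption $\lambda x<\pi/2$ forces both $\lambda\sin x$ and $\lambda x$ to lie in the interval $(0,\pi/2)$ where sine is strictly increasing and positive. Hence $0<\sin(\lambda\sin x)<\sin(\lambda x)$. I then split on the sign of $\cos x$: when $x\in(0,\pi/2]$ one has $\cos x\in[0,1]$, so $\sin(\lambda\sin x)\cos x\le\sin(\lambda\sin x)<\sin(\lambda x)$; when $x\in(\pi/2,\pi)$ one has $\cos x<0$ while $\sin(\lambda\sin x)>0$, so $\sin(\lambda\sin x)\cos x<0<\sin(\lambda x)$. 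In either case $N(x)>0$, which gives $f'(x)>0$ on $(0,\pi)$ (subject to $\lambda x<\pi/2$), and the conclusion follows.

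There is no real obstacle here; the only point that requires attention is the case $x\geq\pi/2$, where $\cos x$ changes sign and the naive inequality $\sin(\lambda\sin x)\cos x<\sin(\lambda x)$ could otherwise look suspicious — but the negative sign of $\cos x$ actually makes the estimate easier, as shown above.
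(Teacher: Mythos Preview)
Your proof is correct and follows essentially the same approach as the paper: differentiate the difference, observe that the common denominator $\sin x$ is positive, and show the numerator $\sin\lambda x-\sin(\lambda\sin x)\cos x$ is positive using $0<\lambda\sin x<\lambda x<\pi/2$. The paper compresses your case split into the single chain $\sin\lambda x>\sin(\lambda\sin x)>\sin(\lambda\sin x)\cos x$ (valid since $\cos x<1$ on $(0,\pi)$ regardless of sign), but the argument is the same.
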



\begin{proof}
Denote $\nu=\lambda \sin x$, so $\nu'_x=\lambda\cos x$. Differentiating, we get
$$
\ba{rcl}
\dst
\frac{d}{dx}\left(\int_0^{x}\frac{\sin y\lambda}{\sin y}\,dy-\int_0^{\nu}\frac{\sin y}{y}\,dy\right)
&=&\dst
\frac{\sin x\lambda}{\sin x}-\frac{\lambda\sin\nu\,\cos x}{\nu}
\\[1.5ex] &=& \dst
\frac{\sin x\lambda-\sin\nu\,\cos x}{\sin x}>0,
\ea
$$
because $\sin x>0$ and 
$\sin \lambda x>\sin\nu>\sin\nu\,\cos x$. 
\end{proof}

The next theorem is a sharper version of one of the results featured in the Introduction. 

\begin{theorem}
\label{thm:ebycos}
For all $\lambda>0$ and all $0<x<\pi/2$ the inequality
\begin{equation}
\label{ebycos}
\left|\Eci(x,\lambda)
-\frac{i}{\lambda}\right|<\frac{1}{\lambda}-M(\lambda)+M(\lambda\cos x).
\end{equation} 
holds.
\end{theorem}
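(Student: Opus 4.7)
The plan is to use the Laplace transform representation \eqref{lt:Eci} together with a pointwise monotonicity comparison on the integrand. Writing $i/\lambda = i\int_0^\infty e^{-\lambda u}\,du$ and subtracting from \eqref{lt:Eci}, I obtain
$$
\Eci(x,\lambda) - \frac{i}{\lambda} = i\int_0^\infty e^{-\lambda u}\left(\frac{1}{\cosh u} - 1\right)du \;-\; i e^{i\lambda x}\int_0^\infty \frac{e^{-\lambda u}\,du}{\cosh(u - ix)}.
$$
The direct computation $|\cosh(u-ix)|^2 = \cosh^2 u\cos^2 x + \sinh^2 u \sin^2 x = \cos^2 x + \sinh^2 u$, the triangle inequality, and the evaluation of the first integral by \eqref{lt:Spi} give
$$
\left|\Eci(x,\lambda) - \frac{i}{\lambda}\right| \leq \left(\frac{1}{\lambda} - \Spi(\lambda)\right) + \int_0^\infty \frac{e^{-\lambda u}\,du}{\sqrt{\cos^2 x + \sinh^2 u}}.
$$

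The tempting next move, to bound the remaining integral above by $M(\lambda\cos x)$ using $\sinh u\geq u$, falls short: combining it with the first term would require $\Spi(\lambda)\geq M(\lambda)$, which is false because $\cosh u > \sqrt{1+u^2}$ for $u>0$. The correct observation is that the ``sinh versus $u$'' gap is strictly larger when the additional parameter $a=\cos x$ is smaller than $1$. Concretely, I claim the pointwise monotonicity: for every $u>0$, the function
$$
\phi(a,u) := \frac{1}{\sqrt{a^2+u^2}} - \frac{1}{\sqrt{a^2+\sinh^2 u}}
$$
is strictly decreasing in $a$ on $(0,\infty)$. Differentiation yields $\partial_a \phi(a,u) = a\bigl[(a^2+\sinh^2 u)^{-3/2}-(a^2+u^2)^{-3/2}\bigr]$, which is strictly negative for $u>0$ since $\sinh u>u$. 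Hence for $0<x<\pi/2$ one has $\cos x<1$ and $\phi(\cos x,u)>\phi(1,u)$ for all $u>0$.

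To conclude, multiply this strict pointwise inequality by $e^{-\lambda u}>0$ and integrate. The change of variables $u=v\cos x$ identifies $\int_0^\infty e^{-\lambda u}/\sqrt{\cos^2 x+u^2}\,du$ with $M(\lambda\cos x)$, while $\int_0^\infty e^{-\lambda u}/\sqrt{1+u^2}\,du = M(\lambda)$ and $\int_0^\infty e^{-\lambda u}/\cosh u\,du = \Spi(\lambda)$. The integrated inequality becomes
$$
M(\lambda\cos x) - \int_0^\infty \frac{e^{-\lambda u}\,du}{\sqrt{\cos^2 x + \sinh^2 u}} \;>\; M(\lambda) - \Spi(\lambda),
$$
and substituting this into the triangle-inequality bound yields \eqref{ebycos}. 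The main obstacle is recognizing that the naive Laplace-plus-$\sinh u\geq u$ estimate must be upgraded to the strict monotonicity of $\phi(\cdot,u)$; this is what delivers the sharp term $M(\lambda)$ on the right-hand side and makes the bound tight in the limit $x\to 0^+$, where both sides equal $1/\lambda$.
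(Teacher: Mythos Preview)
Your proof is correct and follows essentially the same route as the paper: split $\Eci(x,\lambda)-i/\lambda$ via \eqref{lt:Eci} into the real term $1/\lambda-\Spi(\lambda)$ and the complex integral with $|\cosh(u-ix)|=\sqrt{\cos^2 x+\sinh^2 u}$, then compare integrands pointwise. The only cosmetic difference is the direction of the monotonicity lemma: the paper fixes $a=\cos x<b=1$ and shows that $v\mapsto (v^2+a^2)^{-1/2}-(v^2+b^2)^{-1/2}$ is decreasing (applied at $v_1=u<v_2=\sinh u$), whereas you fix $u<\sinh u$ and show that $a\mapsto \phi(a,u)$ is decreasing; these are two views of the same four-point inequality and the proofs are otherwise identical.
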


\begin{proof}
Using \eqref{lt:Eci} and substituting $1/\lambda=\int_0^\infty e^{-u\lambda}\,du$ 
we get 
$$
 i\Eci(y,\lambda)+\frac{1}{\lambda}=
 \int_0^\infty e^{-\lambda u}\left(-\frac{1}{\cosh u}+\frac{e^{i\lambda y}}{\cosh(u-iy)}+1\right)\,du
 =A\,+\,e^{i\lambda y}\,B,
$$
where
$$
\ba{l}
\dst
 A=\int_0^\infty e^{-\lambda u}\left(1-\frac{1}{\cosh u}\right)\,du,
 \qquad
 B=\int_0^\infty \frac{e^{-\lambda u}\,du}{\cosh(u-iy)}.
 \ea
$$
By the triangle inequality,
$$
 \left|\Eci(x,\lambda)
-\frac{i}{\lambda}\right|\leq A+|B|\leq 
\int_0^\infty e^{-\lambda u}\left(1-\frac{1}{\cosh u}+\frac{1}{\sqrt{\sinh^2 u+\cos^2 x}}\right)\,du.
$$
We will show that the last integral is less than the right-hand side of \eqref{ebycos}, which is represented by the integral
 $$
 \frac{1}{\lambda}-M(\lambda)+M(\lambda\cos x)=\int_0^\infty e^{-\lambda u}\left(1-\frac{1}{\sqrt{u^2+1}}+\frac{1}{\sqrt{u^2+\cos^2 x}}\right)\,du.
$$
This can be achieved by comparing the integrands pointwise. 
Put
$$
 f(a, b)=(a^2+b^2)^{-1/2}.
$$
The targeted inequality between the integrands is
$$
 1-f(\sinh u,1)+f(\sinh u, \cos x)< 1-f(u,1)+f(u,\cos x).
$$
Note that the function $v\mapsto f(v,a)-f(v,b)$ is positive and decreasing whenever $0<a<b$.
The required result follows if we take $a=\cos x$, $b=1$ and compare the differences for $v_1=u$
and $v_2=\sinh u>v_1$. 
\end{proof}

\setcounter{corollary}{0}
\begin{corollary}
For any $\lambda\in\RR$ and $0<x<\pi/2$
the inequality
\begin{equation}
\label{ebycos2}
 \left|\int_0^x \frac{e^{iy\lambda}}{\cos y}\,dy-\frac{i}{\lambda}\right|<\frac{1}{\lambda\cos x}.
\end{equation} 
holds.
\end{corollary}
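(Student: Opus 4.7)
The plan is to read this corollary as an immediate simplification of Theorem~\ref{thm:ebycos} together with Proposition~\ref{prop:M-1}. The idea is to drop the terms involving the special function $M$ from the bound in \eqref{ebycos} by exploiting the monotonicity of $t\mapsto 1/t - M(t)$, trading a slightly sharper but awkward bound for a cleaner one.

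Concretely, I would first assume $\lambda>0$ and invoke Theorem~\ref{thm:ebycos}, which gives
\[
\left|\Eci(x,\lambda)-\frac{i}{\lambda}\right|<\frac{1}{\lambda}-M(\lambda)+M(\lambda\cos x).
\]
The desired conclusion is that the right-hand side is bounded by $1/(\lambda\cos x)$. Rearranging, this is equivalent to
\[
\frac{1}{\lambda\cos x}-M(\lambda\cos x)\;>\;\frac{1}{\lambda}-M(\lambda).
\]
Since $0<x<\pi/2$ forces $0<\lambda\cos x<\lambda$, this is precisely the statement that the function $t\mapsto 1/t-M(t)$ is strictly decreasing on $\RR_+$, which is exactly Proposition~\ref{prop:M-1}. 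Thus the inequality for $\lambda>0$ follows with no calculation beyond this rearrangement.

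To extend to $\lambda\in\RR$ (understanding the right-hand side as $1/(|\lambda|\cos x)$, since otherwise the statement is vacuous for $\lambda<0$), I would observe that conjugating the integrand in \eqref{Eci} yields $\overline{\Eci(x,\lambda)}=\Eci(x,-\lambda)$, and $\overline{i/\lambda}=-i/\lambda=i/(-\lambda)$ for real $\lambda$. Therefore
\[
\left|\Eci(x,\lambda)-\tfrac{i}{\lambda}\right|=\left|\Eci(x,-\lambda)-\tfrac{i}{-\lambda}\right|,
\]
reducing the case $\lambda<0$ to the case $|\lambda|>0$ already proved.

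I do not expect a genuine obstacle: both ingredients (the refined bound \eqref{ebycos} and the monotonicity of $1/t-M(t)$) are established earlier, and the corollary is essentially an algebraic repackaging. The only mildly delicate point is noticing that the apparently lossy replacement of $M(\lambda)-M(\lambda\cos x)$ by $1/\lambda-1/(\lambda\cos x)$ is justified by Proposition~\ref{prop:M-1}, which is why it is useful to have stated that monotonicity as an independent fact earlier.
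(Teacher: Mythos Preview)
Your argument is correct and follows the paper's proof essentially verbatim: invoke Theorem~\ref{thm:ebycos} and then apply the monotonicity from Proposition~\ref{prop:M-1} to replace $1/\lambda - M(\lambda) + M(\lambda\cos x)$ by $1/(\lambda\cos x)$. Your handling of $\lambda<0$ via conjugation is in fact more careful than the paper, which states the corollary for all real $\lambda$ but only proves it for $\lambda>0$.
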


\begin{proof}
By Proposition~\ref{prop:M-1}, $1/\lambda-M(\lambda) <1/(\lambda\cos x)-M(\lambda\cos x)$
hence the right-hand side in \eqref{ebycos} is majorized by $1/(\lambda\cos x)$.
\end{proof}

The inequality \eqref{ebycos2} is illustrated in Figure~\ref{fig:trig-ineq}.

\begin{remark*}
The right-hand side of \eqref{ebycos} cannot be in general replaced by $M(\lambda\cos x)$
by analogy with \eqref{ineq:Ll-M}. However, the estimate \eqref{ebycos2} is 
about as good as \eqref{ebycos} except in the region where $t\cos x$ is small. 
\end{remark*}

\begin{figure}
\begin{center}
\begin{picture}(250,250)
{\small 
\put(0,0){\includegraphics[trim=2.5cm 1cm 0cm 1.2cm, width=250pt]{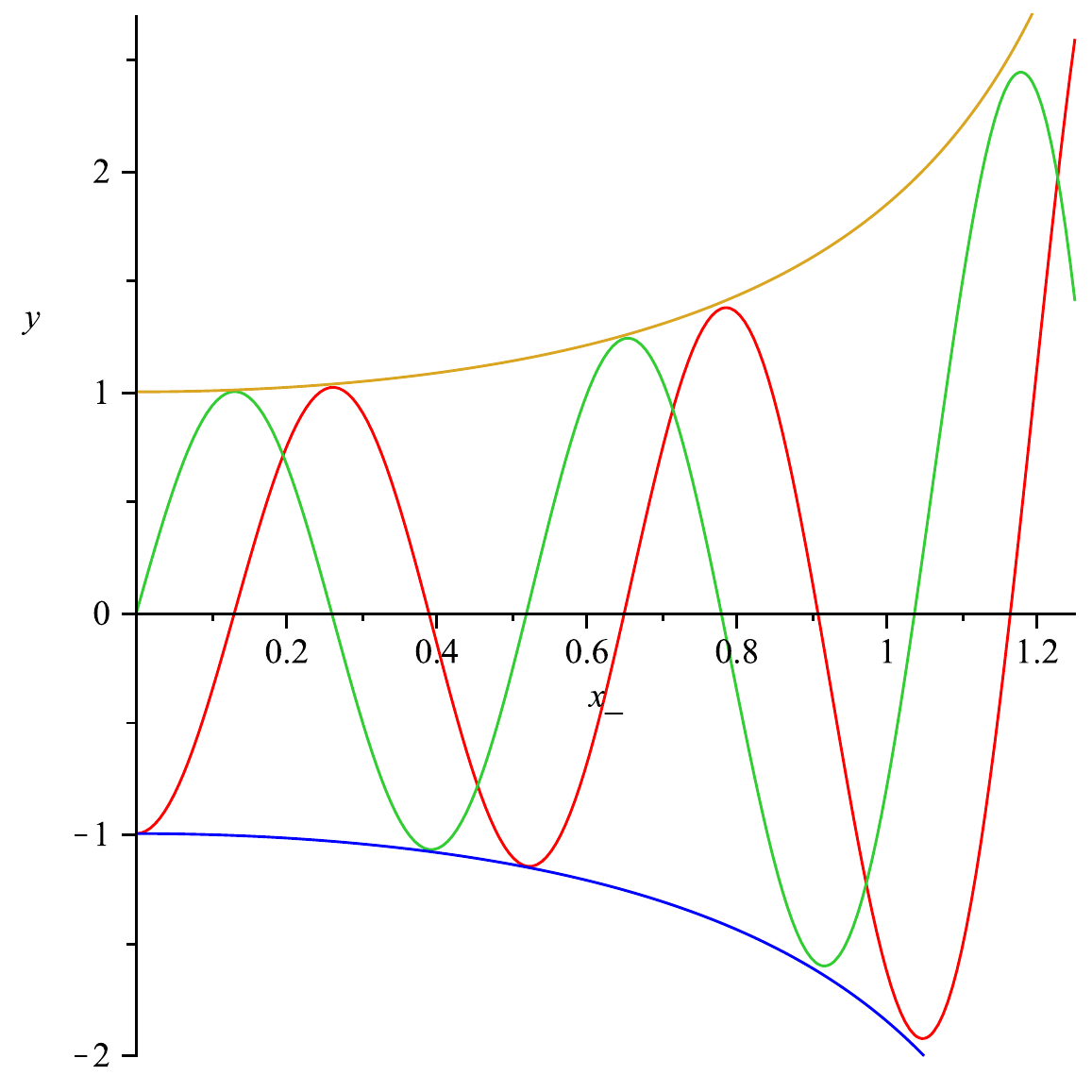}}
\put(72,180){$\sec x$}
\put(74,32){$-\sec x$}
}
\end{picture}
\end{center}
\caption{The functions $\lambda \Cci(x,=\lambda)$ (green), $\lambda \Sci(x,\lambda)-1$ (red),
 and their envelopes $\pm \sec x$ for $\lambda=12$}
\label{fig:trig-ineq}
\end{figure}

Theorem~\ref{thm:intD-sumsquares}
is but a re-formulation of the above Corollary.

Another interesting reformulation of Corollary~1 involving real-valued functions is the following

\begin{corollary}
For any real $\lambda$ and $\theta$ and $0<x<\pi/2$
the inequality
\begin{equation}
\label{ebycos3}
\left|\lambda\int_0^x\frac{\cos(\lambda y-\theta)}{\cos y}\,dy-\sin\theta\right|<\frac{1}{\cos x}. 
\end{equation} 
holds.
\end{corollary}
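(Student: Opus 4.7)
The plan is to derive Corollary~2 directly from Corollary~1 by a complex-rotation trick followed by taking the real part; essentially all the analytic work has already been done. Assuming $\lambda\neq 0$ (the case $\lambda=0$ reduces to $|\sin\theta|\leq 1<\sec x$), I may take $\lambda>0$ without loss of generality, since the bound of Corollary~1 is invariant under $\lambda\to-\lambda$ via complex conjugation of the integrand. Multiplying the inequality of Corollary~1 through by $\lambda>0$ clears the $1/\lambda$ on both sides and gives
\[
\left|\lambda\int_0^x\frac{e^{iy\lambda}}{\cos y}\,dy - i\right| < \frac{1}{\cos x}.
\]

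Next I would multiply the quantity inside the modulus by the unimodular factor $e^{-i\theta}$, which preserves the modulus, producing
\[
\left|\lambda\int_0^x\frac{e^{i(y\lambda-\theta)}}{\cos y}\,dy - ie^{-i\theta}\right| < \frac{1}{\cos x}.
\]
Since $ie^{-i\theta}=\sin\theta+i\cos\theta$, the real part of the expression inside the modulus is precisely $\lambda\int_0^x\frac{\cos(y\lambda-\theta)}{\cos y}\,dy - \sin\theta$, and the trivial inequality $|\Re z|\leq|z|$ delivers \eqref{ebycos3}.

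There is no genuine obstacle; the only mild bookkeeping issue is sign management when $\lambda\leq 0$, handled by conjugation as noted. As a by-product, taking the imaginary part of the same chain yields the companion bound
\[
\left|\lambda\int_0^x\frac{\sin(y\lambda-\theta)}{\cos y}\,dy-\cos\theta\right|<\frac{1}{\cos x},
\]
which makes transparent why the author calls Corollary~2 a mere reformulation of Corollary~1: both are shadows of the single complex inequality above, read off along the real and imaginary axes of an arbitrary rotation $e^{-i\theta}$.
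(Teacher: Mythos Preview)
Your proof is correct and follows exactly the paper's approach: multiply the complex inequality \eqref{ebycos2} by $\lambda$ and by the unimodular factor $e^{-i\theta}$, then take the real part. You are simply more careful than the paper in disposing of the cases $\lambda=0$ and $\lambda<0$, which the paper leaves implicit.
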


\begin{proof}
Apply \eqref{ebycos2} to estimate the real part of the expression
$$
e^{-iy\theta}\left(\lambda\int_0^x \frac{e^{i y\lambda}}{\cos y}\,dy-i\right)
$$
\end{proof}


\begin{remark*}
Let us show that an analog of Corollary 2 with numerator $2$ (instead of 1) in
the right-hand side can be obtained easily. 

Since $1/\cos y$ is an increasing function, by the Second Mean Value Theorem for integrals there exists
$a\in (0,x)$ such that
$$
\ba{rcl}
\dst
\lambda \int_0^x \frac{\cos (\lambda y-\theta)}{\cos y}\,dy
&=&\dst
\int_0^a \lambda \cos (\lambda y-\theta)\,dy+
\frac{1}{\cos x}\int_a^x  \lambda \cos (\lambda y-\theta)\,dy
\\[2ex]
&=&\dst
\sin (\lambda a-\theta)+\sin\theta+\frac{\sin (\lambda x-\theta)-\sin (\lambda a-\theta)}{\cos x}
\ea
$$
Therefore
$$
\lambda\int_0^a \frac{\cos (\lambda y-\theta)}{\cos y}\,dy-\sin\theta
=\frac{\sin (\lambda x-\theta)-(1-\cos x)\sin (\lambda a-\theta)}{\cos x},
$$
where the absolute value of the numerator is clearly bounded by 2.
\end{remark*}

\section{The Fej\'{e}r-Jackson-Tur\'{a}n inequality and variants}
\label{sec:fjt}

\subsection{The main result}
The main result featured in Introduction is Theorem~\ref{thm:myFJT}. 
In notation introduced in subsection~\ref{ssec:def-FJsums}
it states that  for any $n=1,2,\dots$ and $0\leq x<\pi$
$$
 |S(x,0)-S_n(x,0)|\leq \arccot\left((2n+1)\sin \frac{x}{2}\right).
$$

All the necessary work has been done above and this inequality comes now as 
a particular case of Theorem~\ref{thm:ImF-arccot} with $\mu=n$.

Below, the bound \eqref{ineq:myFJT} is compared to bounds known from earlier papers
and a chain of reasoning that led to its discovery is outlined.

\subsection{Earlier bounds for Fej\'{e}r-Jackson sums}

The inequality $S_n(x,0)\geq 0$ ($0\leq x\leq\pi$) was conjectured by  Fej\'{e}r in 1910 and first
proved independently by  Jackson \cite{Jackson1911} and Gronwall \cite{Gronwall1912}. The 
chronology of the initial communications is
briefly described in \cite[p.~302]{MMR1994}. That result gave rise to a large body of literature on conditions for positivity in various classes of trigonometric and algebraic polynomials. Details can be found in the handbooks \cite[Ch.XXI]{Mitrinovic1993}, \cite[\S~3.14]{Finch2003}, in introductory sections of the papers cited below, 
and in references therein.
Some relatively recent works are  
\cite{Mondal-Swaminathan2011}, \cite{Alzer-Fuglede2012}, \cite{Kwong2015}, \cite{Alzer-Kwong2015}. 

Tur\'{a}n \cite{Turan1938} contributed an upper bound and wrote the two-sided estimate 
\begin{equation}
\label{FJT}
\left|\sum_{k=1}^n\frac{\sin kx}{k}-\frac{\pi-x}{2}\right|\leq
\frac{\pi-x}{2}=S(x,0),
\qquad 0\leq x\leq\pi.
\end{equation}
The attribute ``Fej\'{e}r-Jackson-Tur\'{a}n'' in the section title refers to this inequality.

One readily sees that the inequality \eqref{FJT} is quite loose except when $x$ is $O(1/n)$-close to $\pi$.
Attempts to improve it were undertaken primarily in the context of research on positivity 
and therefore the effort focused on improving the lower bound for $S_n(x,0)$. 

Fej\'{e}r proved \cite[Eq.(113)]{Fejer1928}, \cite[p.~314]{MMR1994} that
\begin{equation}
\label{lb:Fejer1928}
 S_n(x,0)\geq \frac{\sin x}{3}+\frac{\sin nx}{2n},
\end{equation}
where the right-hand side is positive for $\pi/n<x<\pi-\pi/n$, $n\geq 3$. 

Tur\'{a}n \cite{Turan1952} gave the $n$-independent lower bound valid for $n\geq 2$ and all $x\in (0,\pi)$ 
\begin{equation}
\label{lb:Turan1952}
S_n(x)>4\sin^2\frac{x}{2}\cdot \left(\cot\frac{x}{2}-\frac{\pi-x}{2}\right).  
\end{equation}
A few decades later it was improved by Alzer and Koumandos \cite{Alzer-Koumandos2003b}: 
\begin{equation}
\label{lb:AK2003}
 S_n(x,0)>x^2\left(\cot\frac{x}{2}-\frac{\pi-x}{2}\right). 
\end{equation}

Tur\'{a}n's upper bound $S_n(x,0)\leq \pi-x$ was improved (for even $n$) by
Alzer and Koumandos \cite{Alzer-Koumandos2003a}: if $n\geq 2$ is even, then 
$$
 S_n(x,0) <\alpha (\pi-x),
$$
where $\alpha\approx 0.66395$ is the optimal value.

Brown and Koumandos \cite{BrownKoumandos1998} proved that
\begin{equation}
\label{lb:BK1998}
 S_n(x,0)>\frac{1-\sin\frac{x}{2}}{\cos\frac{x}{2}} 
\end{equation}
provided $x$ is at least $O(1/n)$ (given by specific formulas) distance away from the endpoints of 
$[0,\pi]$ (if $n$ is odd, the endpoint $\pi$ needs not be separated).

These latter bounds are termed ``functional'' bounds meaning that their right-hand sides are independent of $n$.

In \cite[p.~393]{Koumandos2012} Koumandos gave the polynomial functional lower bound
$$
 S_n(x,0)>x\left(1-\frac{x}{\pi}\right)^3
$$
as a special case of an inequality with multiple parameters. 
But it is pointwise weaker than both \eqref{lb:AK2003} and \eqref{lb:BK1998}.


\begin{figure}
\begin{center}
\begin{picture}(350,260)
{\small
\put(0,-10){\includegraphics[clip, trim=0pt 1cm 0cm 0cm, width=300pt]{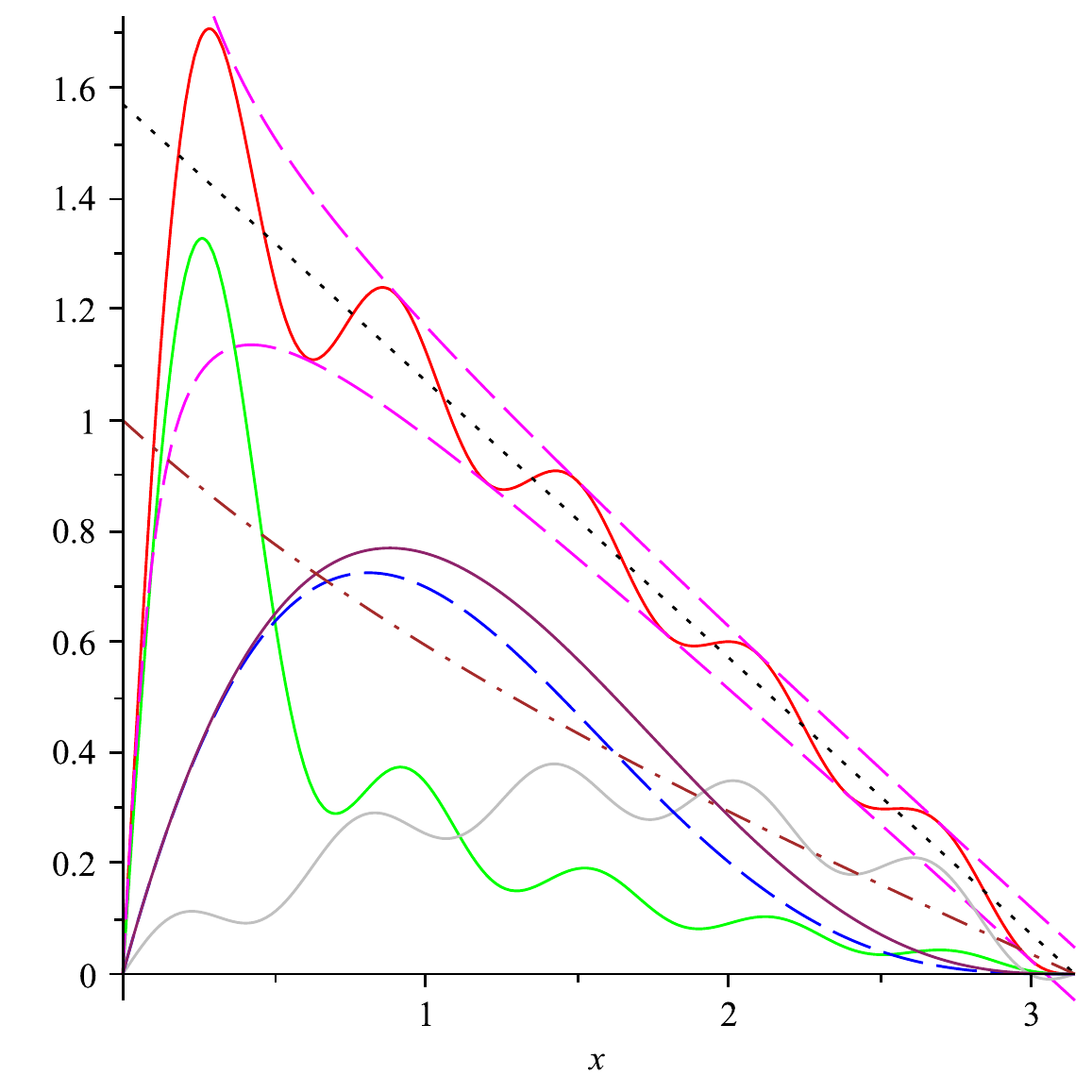}}
\put(135,167){\eqref{ineq:myFJT}}
\put(102,151){\eqref{ineq:myFJT}}
\put(56,26){\eqref{lb:Fejer1928}}
\put(125,98){\eqref{lb:Turan1952}}
\put(145,102){\eqref{lb:AK2003}}
\put(68,147){\eqref{AlKou12}}
\put(95,92){\eqref{lb:BK1998}}
}
\end{picture}
\end{center}
\caption{The function $S_{10}(x,0)$, its known minorants and the envelopes \eqref{ineq:myFJT}}
\label{fig:fejer-jackson-all-minorants-n10}
\end{figure}

Functional bounds, as interesting as they may be, cannot pretend to prop up the graph of $S_n(x,0)$ tightly
as  $n$ increases.
Alzer and Koumandos \cite{Alzer-Koumandos2012} 
gave a complicated $n$-dependent lower bound 
\begin{equation}
\label{AlKou12}
 S_n(x,0)\geq \frac{\pi}{4}\delta_n \cot\frac{x}{2} (1-P_n(\cos x)),
\end{equation}
where $P_n(x)$ are the Legendre polynomials and 
$$
 \delta_n=\frac{m+1}{m+3/2}\left(\frac{m!}{\Gamma(m+3/2)}\right)^2,
 \qquad m=\left\lfloor\frac{n-1}{2}\right\rfloor.
$$

Figure~1 illustrates, in the case $n=10$, the lower bounds \eqref{lb:Fejer1928}, \eqref{lb:Turan1952}, \eqref{lb:AK2003},
\eqref{lb:BK1998}, \eqref{AlKou12}, and our two-sided bound \eqref{ineq:myFJT}.  

The estimate \eqref{ineq:myFJT} faithfully reflects the behavior of the sums $S_n(x,0)$ in the ``main run'',
that is, away from small neighborhoods of the endpoints of size $O(1/n)$. Near the left end, \eqref{ineq:myFJT}
is slightly inferior to \eqref{AlKou12}. A neighbourhood of $\pi$ is the only region where \eqref{ineq:myFJT}
is not adequate: the bound has order of magnitude $O(1/n)$ comparable with magnitude of the estimated
function in that region.

\subsection{The genesis of the improved FJT inequality}


The Fej\'{e}r-Jackson inequality estimates the absolute value of the difference $S(x,0)-S_n(x,0)$. 
Classical formulas give the expression of this difference in terms of an integral of the $n$-th Dirichlet kernel,
$$
 S(x,0)-S_n(x,0)=\frac{\pi-x}{2}-\frac{1}{2}\int_0^x(D_n(t)-1)\,dt=\frac{\pi}{2}-
 \frac{1}{2}\int_0^x \frac{\sin \left(n+\frac{1}{2}\right)t}{\sin\frac{t}{2}}\,dt.
$$
Since $S(x,\pi)=S_n(x,\pi)=0$, it is possible to write the result in a ``pure'' integral form
$$
S(x,0)-S_n(x,0)=\frac{1}{2}\int_x^\pi \frac{\sin \left(n+\frac{1}{2}\right)t}{\sin\frac{t}{2}}\,dt,
$$
or, re-denoting the old $x$ as $\pi-2 x$ (with new $x$), putting
$$
 \lambda=2n+1
$$
for conciseness, and making an obvious change of variable, --- in the form
$$
S\left(\pi-2x,0\right)-S_n\left(\pi-2x,0\right)=(-1)^n\int_0^x \frac{\cos \lambda t}{\cos t}\,dt.
$$

Next, the integral in the right-hand  side is easy to bound by the value $2/(\lambda\cos x)$
as is done in Remark at the end of subsection~\ref{ssec:trig_ineq}.
One can search for a possible improvement of that bound.
Once the better bound $1/(\lambda\cos x)$ is discovered, one immediately obtains the weaker form
of the inequality \eqref{ineq:myFJT}
$$
\left|\sum_{k=1}^n\frac{\sin kx}{k}-\frac{\pi-x}{2}\right|<\frac{2}{(2n+1)x},
$$
which already supplies very adequate enveloping curves for the graph of $S(x,0)-S_n(x,0)$ except for small values of $x$.
Attempts to find an enveloping curve bounded near $0$ led to the discovery of the arccotangent bound of Theorem~\ref{thm:myFJT}.

A very different path that can potentially lead to the discussed tight envelopes might begin with
Nikonov's \cite{Nikonov1939} analysis of the curves containing minima and maxima of the sums $T_n(x,0)$
and $S_n(x,0)$. 

\section{Error of the Taylor polynomial approximation of the logarithmic function in the unit circle}
\label{sec:LogTaylor}

In this section we estimate the remainder 
$$
R_n(z)=\log(1-z)-\sum_{k=1}^n \frac{z^k}{k}=\sum_{k=n+1}^\infty \frac{z^k}{k}
$$
of Taylor's approximation of the logarithmic
function 
in the region of convergence. Our goal is to obtain an estimate that truly reflects a logarithmic (as opposed to a stronger one) character of the singularity.

To begin with, consider the case of real $z$ in the interval $(0,1)$
Then 
$$
 |R_n(z)|<\frac{1}{n+1}\sum_{k=n+1}|z|^k<\frac{|z|^{n+1}}{(n+1)|1-z|)}.
$$
This estimate does not hold for all $z$ in the region of convergence.
For instance, the asymptotics  at $z=-1$
$$
 \sum_{k=n+1}^\infty\frac{(-1)^{k-n-1}}{k}=\frac{1}{2n}-\frac{1}{4n^2}+O(n^{-3})=\frac{1}{2(n+1/2)}+O(n^{-3})
$$
shows that the inequality
$$
|R_n(z)|<\frac{1}{n+1}\sum_{k=n+1}|z|^k<\frac{|z|^{n+1}}{(n+\alpha)|1-z|}.
$$
cannot be true for $z=-1$ if $\alpha>1/2$.

The critical value $\alpha=1/2$ is already suitable. We will derive this result from an estimate for an FJ sum 
obtained in Section~\ref{sec:main_ineq}.

\begin{theorem}
\label{thm:Logtaylor-simple}
If $|z|\leq 1$, $z\neq 1$, then
\begin{equation}
\label{ineq:LogTaylor}
|R_n(z)|<\frac{|z|^{n+1}}{\left(n+\frac{1}{2}\right)|1-z|}.
\end{equation}
\end{theorem}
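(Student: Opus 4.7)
The plan is to apply the maximum modulus principle to the auxiliary function
\[
g(z)=\frac{(1-z)R_n(z)}{z^{n+1}},
\]
reducing the required estimate on the open disk to its boundary version, which turns out to be precisely the FJ-sum bound established in Section~\ref{sec:main_ineq}.

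First I would check that $g$ is analytic on $|z|<1$ and continuous on $|z|\le 1$. Writing $R_n(z)=\sum_{j=0}^\infty z^{j+n+1}/(j+n+1)$, multiplying by $(1-z)z^{-(n+1)}$, and collecting terms yields the telescoping representation
\[
g(z)=\frac{1}{n+1}-\sum_{j=1}^\infty \frac{z^j}{(j+n)(j+n+1)},
\]
whose coefficients are absolutely summable with total $\sum_{j\ge 1}1/((j+n)(j+n+1))=1/(n+1)$; in particular $g$ extends continuously to the closed unit disk, and $g(1)=0$.

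Next I would compute the boundary values. For $z=e^{ix}$ with $x\in(0,2\pi)$, the reindexing $k\mapsto m+n$ gives $R_n(e^{ix})=e^{inx}L(x,n)$, and since $|1-e^{ix}|=2\sin(x/2)$, $|g(e^{ix})|=2\sin(x/2)\cdot|L(x,n)|$. By Theorem~\ref{thm:FJ-L-M}(a) combined with Proposition~\ref{prop:M-1}, $|L(x,n)|<M((2n+1)\sin(x/2))<1/((2n+1)\sin(x/2))$, whence $|g(e^{ix})|<1/(n+\tfrac{1}{2})$ strictly for every $x\in(0,2\pi)$ (the range extending from $(0,\pi)$ to $(0,2\pi)$ by the conjugation symmetry $L(2\pi-x,n)=\overline{L(x,n)}$). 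Together with $g(1)=0$ this gives strict inequality $|g(z)|<1/(n+\tfrac{1}{2})$ throughout the unit circle, and by compactness $\max_{|z|=1}|g(z)|$ is attained at some value $M'<1/(n+\tfrac{1}{2})$.

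Finally I would invoke the maximum modulus principle to conclude $|g(z)|\le M'<1/(n+\tfrac{1}{2})$ for all $|z|\le 1$, which after multiplying by $|z|^{n+1}/|1-z|$ is exactly \eqref{ineq:LogTaylor}. The one subtle point is the behavior near $z=1$, where $R_n$ itself has a logarithmic singularity; the telescoping expression for $g$ absorbs this singularity into the factor $(1-z)$ and makes manifest that $g(1)=0$, which is what allows the maximum-modulus argument to cover the full closed disk. The main work of the proof is thus entirely packaged into the boundary estimate \eqref{ineq:L-frac}, already proved earlier.
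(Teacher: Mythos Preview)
Your proof is correct and follows essentially the same route as the paper's: define the auxiliary function $(1-z)z^{-(n+1)}R_n(z)$, bound it on the unit circle via the FJ-sum estimate \eqref{ineq:L-frac}, and invoke the Maximum Modulus Principle. You are somewhat more careful than the paper in two places --- you exhibit the telescoping power series to verify continuity of $g$ on the closed disk (in particular $g(1)=0$), and you use a compactness argument to make the strict inequality explicit --- but these are refinements of presentation rather than a different method.
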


\begin{proof}
Conside the analytic function
$$
f(z)=(1-z)\sum_{k=1}^\infty \frac{z^k}{k+n} \,=\frac{1-z}{z^{n+1}}\,R_n(z).
$$ 
Take $z=e^{i\theta}$ on the boundary of the convergence circle. Then
$$
 f(e^{i\theta})=-2i e^{i\theta/2}\,\sin\frac{\theta}{2}\,\cdot\, L(\theta,n).
$$
By the inequality \eqref{ineq:L-frac}, which is a coarser form of the estimate in Theorem~\ref{thm:FJ-L-M},
we have
$$
 | f(e^{i\theta})|\leq \frac{2}{2n+1}, \quad -\pi<\theta<\pi.
$$
Applying the Maximum Modulus Principle to $f(z)$ in the unit circle we get the
claimed estimate.
\end{proof}

When $z$ is close to 1, the estimate given by Theorem~\ref{thm:Logtaylor-simple} is weak, since the series
in the left-hand side has logarithmic singularity at $z=1$, while the majorizing function grows as $|1-z|^{-1}$.
As a partial remedy, we will first derive a preliminary, relatively simple result, where the majorant
will behave as $O(\log(1-|z|))$ as $|z|\to 1^-$ (still not as $O(\log|1-z|)$).

\begin{proposition}
\label{prop:Logtaylor-log}
For $z$ in the unit circle put
$p=(n+1)(1-|z|)$ and 
$$
m(p)=\begin{cases}
(e\,p)^{-1}\;\;\text{if $p\geq e^{-1}$},\\
|\log p|\;\;\text{if $p<e^{-1}$}.
\end{cases}
$$
Then
\begin{equation}
\label{ineq:logtaylor-series-e}
 |R_n(z)|<m(p).
\end{equation}
\end{proposition}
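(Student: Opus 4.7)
The plan is to reduce the complex statement to a real-variable inequality via the coefficient-wise triangle inequality $|R_n(z)| \le \sum_{k\ge n+1}|z|^k/k = R_n(|z|)$, so that it suffices to prove $R_n(r) < m(p)$ for $r := |z| \in (0,1)$, $n \ge 1$, and $p = (n+1)(1-r)$.

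The crux of the proof is the single sharp estimate
\[
R_n(r) \;\le\; \frac{(1-r^{n+1})\bigl(-\log(1-r^{n+1})\bigr)}{p}.
\]
I would prove this by grouping the terms of $\sum_{k\ge n+1} r^k/k$ into consecutive blocks of length $n+1$: the $j$-th block ($j \ge 1$) collects the indices $k=j(n+1),\dots,(j+1)(n+1)-1$. Inside each block I replace every denominator $j(n+1)+l$ by its minimum $j(n+1)$, so the inner sum collapses to the geometric $\sum_{l=0}^n r^l = (1-r^{n+1})/(1-r)$. Summing over $j$ and recognising $\sum_{j \ge 1} r^{j(n+1)}/j = -\log(1-r^{n+1})$ yields the bound, the factor $(n+1)(1-r)$ in the denominator being exactly $p$.

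With the abbreviation $s := 1-r^{n+1}$ and $\varphi(s) := -s\log s$, the estimate reads $R_n(r) \le \varphi(s)/p$, and the two branches of $m(p)$ then correspond to two different bounds on $\varphi$. For $p \ge 1/e$ I would use the global maximum $\varphi(s) \le 1/e$ on $(0,1)$, attained at $s = 1/e$, which yields $R_n(r) \le 1/(ep)$. For $p < 1/e$ I would first invoke Bernoulli's inequality $r^{n+1} = (1-p/(n+1))^{n+1} \ge 1-p$ to conclude $s \le p < 1/e$; then, since $\varphi$ is increasing on $(0,1/e)$, one has $\varphi(s) \le \varphi(p) = -p\log p$, so $R_n(r) \le -\log p = |\log p|$.

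The main obstacle is discovering the block-grouping estimate; once it is recognised, the two cases are elementary calculations on the function $\varphi$. Strict inequalities are automatic for $n \ge 1$: within each block the bound $1/(j(n+1)+l) < 1/(j(n+1))$ is strict for $l \ge 1$, and Bernoulli's inequality is strict for $p > 0$.
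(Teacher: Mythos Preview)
Your proposal is correct and follows essentially the same route as the paper: the paper also groups the tail into blocks of length $n+1$, replaces each denominator by the smallest one in its block to obtain
\[
|R_n(z)|<\frac{-\ln(1-r^{n+1})}{n+1}\cdot\frac{1-r^{n+1}}{1-r}=\frac{\varphi(s)}{p},
\]
then uses $\max_{(0,1)}\varphi=e^{-1}$ for the first branch and Bernoulli's inequality $r^{n+1}>1-p$ together with the monotonicity of $\varphi$ on $(0,e^{-1})$ for the second. Your explicit reduction to the real case via $|R_n(z)|\le R_n(|z|)$ and your remark on strictness are the only (cosmetic) additions.
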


\begin{proof}
Let $|z|=r<1$. Grouping the terms of the series into blocks of size $n+1$, we write
$$
 \sum_{k=n+1}^\infty \frac{z^k}{k}=\sum_{m=1}^\infty \sum_{j=0}^n \frac{z^{m(n+1)+j}}{m(n+1)+j}.
$$
Therefore
$$
|R_n(z)|
<\sum_{m=1}^\infty\frac{r^{m(n+1)}}{m(n+1)}
\sum_{k=0}^n r^k=\frac{-\ln(1-r^{n+1})}{n+1}\cdot\frac{1-r^{n+1}}{1-r}.
$$
Since $\max\limits_{0<x\leq 1} x|\ln x|=e^{-1}$, we get the unconditional estimate
$$
|R_n(z)|<(ep)^{-1}.
$$

Suppose now that $p<e^{-1}$. Substituting $t=1-r$ in the Bernoulli inequality
$(1-t)^{n+1}>1-(n+1)t$, we get $r^{n+1}>1-p$, hence $1-r{n+1}<p<e^{-1}$.

The 
function $x\mapsto x|\ln x|$ is increasing in $(0,e^{-1})$, so
$$
 (1-r^{n+1})|\ln (1-r^{n+1})|< p|\ln p|,
$$
 and the estimate
$
|R_n(z)|
<|\ln p|
$
follows.
\end{proof}

Theorem~\ref{thm:Logtaylor-simple} clearly beats Proposition~\ref{prop:Logtaylor-log} when $1-|z|$ is small
but $|1-z|$ is not.
This motivates us to look for an estimate involving $\log(|1-z| n)$
rather than $\log(1-|z|)n$. Such an estimate is presented in the next theorem.

\begin{theorem}
\label{thm:Logtaylor:log1-z}
If $|z|<1$ and $|1-z|(n+1/2)<1$, then 
\begin{equation}
\label{ineq:logtaylor:log1-z}
\left|\sum_{k=n+1}^\infty\frac{z^k}{k}\right|< 
|z|^{n+1}\left(-\log\left(|1-z|\left(n+\frac{1}{2}\right)\right)+C_3\right),
\end{equation}
where $C_3=\sqrt{(\pi/2)^2+1}\approx 1.862$. 
\end{theorem}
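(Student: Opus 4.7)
The plan is to reduce the claim to a bound on the analytic function $h(z)=R_n(z)/z^{n+1}=\sum_{k=0}^\infty z^k/(k+n+1)$: the theorem is equivalent to
\[
|h(z)|<-\log(|1-z|(n+\tfrac12))+C_3
\]
on the lens-shaped open set $\Omega=\{|z|<1\}\cap\{|1-z|(n+\tfrac12)<1\}$. I will prove this by applying the maximum principle for subharmonic functions to
\[
v(z)=|h(z)|+\log\bigl(|1-z|(n+\tfrac12)\bigr),
\]
which is subharmonic on $\Omega$ because $|h|$ is subharmonic ($h$ is analytic) and $\log|1-z|$ is harmonic on $\Omega$ (since $1\notin\Omega$).

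The boundary $\partial\Omega$ consists of an arc on the unit circle and an arc of the small circle $|1-z|=1/(n+\tfrac12)$, joined at two corner points and accumulating at $z=1$. On the unit-circle arc, with $z=e^{i\theta}$, the identity $L(\theta,n)=e^{i\theta}h(e^{i\theta})$ gives $|h(e^{i\theta})|=|L(\theta,n)|$, and Theorem~\ref{thm:FJ-L-M}(c) yields $|L(\theta,n)|<-\log((n+\tfrac12)|1-e^{i\theta}|)+C_2$, so $v<C_2$ there. On the other arc the logarithmic summand of $v$ vanishes, so $v=|h|$, and Theorem~\ref{thm:Logtaylor-simple} gives $|h(z)|=|R_n(z)|/|z|^{n+1}<1/((n+\tfrac12)|1-z|)=1$.

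The main obstacle is controlling $v$ near the accumulation point $z=1$. From $R_n(z)=-\log(1-z)-H_n+O(|1-z|)$ (with $H_n=\sum_{k=1}^n 1/k$) together with $z^{n+1}=1+O(|1-z|)$, I obtain $h(z)=-\log(1-z)-H_n+o(1)$ as $z\to 1$. Writing $-\log(1-z)=-\log|1-z|-i\arg(1-z)$ and using $|\arg(1-z)|<\pi/2$ on the unit disk,
\[
|h(z)|=\sqrt{(\log|1-z|+H_n)^2+\arg^2(1-z)}+o(1),
\]
so the leading $-\log|1-z|$ exactly cancels the explicit $\log|1-z|$ in $v$, yielding $\limsup_{z\to 1}v(z)=\log(n+\tfrac12)-H_n$; this is $\le 0$ because $H_n>\log(n+1)>\log(n+\tfrac12)$ for every $n\ge 1$.

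Combining these three estimates, $\limsup_{z\to\zeta}v(z)\le\max(C_2,\,1,\,\log(n+\tfrac12)-H_n)\le 1<C_3=\sqrt{(\pi/2)^2+1}$ at every $\zeta\in\partial\Omega$. The subharmonic maximum principle then delivers $v(z)<C_3$ throughout $\Omega$, which is the claimed inequality after multiplication by $|z|^{n+1}$. The specific value $\sqrt{(\pi/2)^2+1}$ arises naturally if one bypasses Theorem~\ref{thm:Logtaylor-simple} on the small arc and instead bounds $v$ directly from the expansion, using $|h(z)|\le\sqrt{(\log|1-z|+H_n)^2+(\pi/2)^2}$ on the unit disk together with the uniform estimate $H_n-\log(n+\tfrac12)<1$.
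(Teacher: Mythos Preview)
Your argument is correct and, in fact, yields a sharper constant than the one stated: your boundary analysis gives $\limsup v\le\max(C_2,1,\log(n+\tfrac12)-H_n)=1$, so you actually prove the inequality with $1$ in place of $C_3\approx 1.862$.

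The paper works on the same lens domain and uses the same two boundary inputs (Theorem~\ref{thm:FJ-L-M}(c) on the unit-circle arc, Theorem~\ref{thm:Logtaylor-simple} on the small arc), but packages them differently. Instead of your subharmonic $v(z)=|h(z)|+\log(|1-z|(n+\tfrac12))$, the paper applies the holomorphic maximum modulus principle to
\[
f(z)=\frac{h(z)}{-\log\bigl((1-z)(n+\tfrac12)\bigr)+1},
\]
shows $|f|\le 1$ on both arcs, and then must invoke Phragm\'en--Lindel\"of to handle the essential singularity of $f$ at $z=1$. The constant $C_3=\sqrt{(\pi/2)^2+1}$ enters at the very end, via the elementary bound $|-\log((1-z)(n+\tfrac12))+1|\le -\log(|1-z|(n+\tfrac12))+C_3$; it is the price of carrying the complex logarithm (with its imaginary part $\arg(1-z)$) through the denominator. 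Your choice to add the \emph{real} harmonic function $\log|1-z|$ to $|h|$ sidesteps that imaginary part entirely, and your direct asymptotic $v(z)\to\log(n+\tfrac12)-H_n$ at $z=1$ replaces the Phragm\'en--Lindel\"of machinery with a one-line limit. So the two proofs share their skeleton but your subharmonic route is both simpler at the singular point and quantitatively better. (Your closing remark speculating on the origin of $C_3$ is not quite how the paper obtains it, but that is incidental to your proof.)
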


\begin{proof}
Put $\rho=(n+1/2)^{-1}$.
Consider the analytic function
$$
 f(z)=\frac{\dst z^{-(n+1)}R_n(z)}{-\log((1-z)(n+1/2))+1}
$$
in the domain 
$$
D_\rho=\{z:\, |z-1|<\rho,\;|z|<1\}.
$$
Note that $\Re(-\log (1-z)(n+1/2))\geq 0$ in $D_\rho$.

\smallskip
Let us estimate $|f(z)|$ on the boundary of $D_\rho$.

\smallskip
(i) On the part of the boundary of $D_\rho$ where $|z|=1$ we write $z=e^{i\theta}$ and use \eqref{ineq:L-log}
to obtain
$$
 |f(e^{i\theta})|<\frac{|L(\theta,n)|}{-\log ((2n+1)\sin\frac{\theta}{2})+1}\leq 1.
$$
(We take into account that $C_2<1$ in \eqref{ineq:L-log}.)

The bound on the unit circle seems rather weak.
However, the ``bad'' factor (with logarithmic growth near $\theta=0$) will get multiplied by $|z|^n$
and ensure a meaningful estimate inside the circle.

\smallskip
(ii)
On the part of the boundary of $D_\rho$ where $|1-z|=\rho$ we have
by Theorem~\ref{thm:Logtaylor-simple},
$$
 |f(z)|\leq|R_n(z)|\leq \frac{1}{(n+1/2)\rho}=1. 
$$

\smallskip
If the function $f(z)$ were continuous up to the boundary in $D_\rho$, then by the Maximum Modulus Principle
we could claim that $|f(z)|\leq 1$ in $D_\rho$. Let us assume that this inequality is true and deduce the conclusion of
the Theorem.

Put $t=-\log|(1-z)(n+1/2)|$ and $\alpha=\arg(1-z)$.
In the domain $D_\rho$ we have $t\geq 0$, $|\alpha|<\pi/2$.
Applying the elementaty inequality $(t+1)^2+\alpha^2\leq (t+\sqrt{1+\alpha^2})^2$,
we get
$$
 \left|-\log((1-z)(n+1/2))+1\right|<t+\sqrt{1+(\pi/2)^2}.
$$
Together with the inequality $|f(z)|\leq 1$ this leads to \eqref{ineq:logtaylor:log1-z}.

\smallskip
We return to justification of using the Maximum Modulus Principle. 
The problem is the presence of the essential singularity $z=1$ on the boundary of $D_\rho$. Let us recall that this fact cannot be ignored, cf.\ the function $\exp(1-z)^{-1}$,
which is defined and bounded on $\partial D_\rho$ except at $z=1$. We need to use the
Phragm\'{e}n-Lindel\"of theory, see e.g.\ \cite[Ch.~3]{Garding1997}.
The {\em critical growth condition}\ in this case (the boundary has a tangent at the exceptional point) is:
for any $\eps>0$
$$
 |f(z)|e^{-\eps/|z-1|}\to 0
$$
as $z\to 1$ inside $D_\rho$.
It is satisfied in our case due to the estimate \eqref{ineq:LogTaylor}.
\end{proof}

Theorems~\ref{thm:Logtaylor-simple} and \ref{thm:Logtaylor:log1-z} can be combined so as to obtain a formulation
analogous to that of Proposition~\ref{prop:Logtaylor-log}:

\begin{corollary*}
For $z$ in the unit circle put
$q=(n+1/2)|1-z|$ and 
$$
\hat m(q)=\min\left(q^{-1},\; \log q^{-1}+\sqrt{(\pi/2)^2+1}\right).
$$
Then
\begin{equation}
\label{ineq:logtaylor-series-e}
 |R_n(z)|<|z|^{n+1}\,\hat m(q).
\end{equation}
\end{corollary*}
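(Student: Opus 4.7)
The plan is to package Theorems~\ref{thm:Logtaylor-simple} and \ref{thm:Logtaylor:log1-z} into a single statement, recognizing that the two arguments of the $\min$ defining $\hat m(q)$ are precisely the bounds on $|z|^{-(n+1)}|R_n(z)|$ supplied by those two theorems. No new analytic work is required; the Corollary is essentially a unified restatement of results already in hand.

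Concretely, the first step is to rewrite Theorem~\ref{thm:Logtaylor-simple} in the notation $q=(n+\tfrac12)|1-z|$: the inequality \eqref{ineq:LogTaylor} reads $|R_n(z)| < |z|^{n+1}q^{-1}$, valid for every $z$ in the closed unit disk with $z\neq 1$. This supplies the first argument of $\hat m(q)$ unconditionally.

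The second step is to invoke Theorem~\ref{thm:Logtaylor:log1-z} in the regime where its hypothesis holds, namely $q<1$. Then \eqref{ineq:logtaylor:log1-z} becomes $|R_n(z)| < |z|^{n+1}\bigl(\log q^{-1} + C_3\bigr)$ with $C_3 = \sqrt{(\pi/2)^2+1}$, which supplies the second argument. Taking the pointwise minimum of the two bounds on $|z|^{-(n+1)}|R_n(z)|$ yields the claimed inequality.

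The only bookkeeping concern is across the threshold $q=1$: the logarithmic expression is not produced by Theorem~\ref{thm:Logtaylor:log1-z} when $q\geq 1$, but at $q=1$ we already have $q^{-1}=1 < C_3$, so the minimum naturally defers to $q^{-1}$ on $[1,\infty)$, which is precisely where Theorem~\ref{thm:Logtaylor-simple} alone suffices. Thus the piecewise gluing is seamless, and no further matching argument is needed. I do not anticipate any genuine obstacle; the proof is a one-line appeal to the two preceding theorems followed by the observation above.
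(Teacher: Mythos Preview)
Your overall plan---combining Theorems~\ref{thm:Logtaylor-simple} and \ref{thm:Logtaylor:log1-z}---is exactly what the paper intends; no separate proof is given there beyond the remark that the Corollary packages those two theorems.

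However, your bookkeeping at the threshold contains a genuine error. You assert that ``the minimum naturally defers to $q^{-1}$ on $[1,\infty)$'' after checking only the single value $q=1$. This is false: while $q^{-1}=1<C_3=\log 1+C_3$ at $q=1$, the function $q\mapsto\log q^{-1}+C_3$ decreases without bound and drops below $q^{-1}$ once $q^{-1}+\log q>C_3$, i.e.\ for $q\gtrsim 5.35$; it even becomes \emph{negative} for $q>e^{C_3}\approx 6.44$. In that range Theorem~\ref{thm:Logtaylor:log1-z} does not apply (its hypothesis is $q<1$), so you have no bound of the form $|R_n(z)|<|z|^{n+1}(\log q^{-1}+C_3)$ to fall back on.

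In fact the Corollary as literally printed cannot hold there: take $z$ real and close to $-1$ with $n\geq 3$; then $q\approx 2n+1$, $\hat m(q)=\log q^{-1}+C_3<0$, yet $|R_n(z)|>0$. So the gap in your argument is not repairable---it reflects a slip in the stated formula for $\hat m$. What your argument \emph{does} prove (and what is evidently intended, by analogy with the piecewise $m(p)$ of Proposition~\ref{prop:Logtaylor-log}) is the bound with
\[
\hat m(q)=\begin{cases} q^{-1}, & q\geq 1,\\[0.5ex] \min\bigl(q^{-1},\,\log q^{-1}+C_3\bigr), & q<1.\end{cases}
\]
You should flag the discrepancy rather than paper over it.
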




\end{document}